\newtheorem{theorem}{\bf \large Theorem}[section]
\newtheorem{PROPOSITION}{\bf \large Proposition}[section]
\newtheorem{corollary}{\bf \large Corollary}[section]
\newtheorem{ex}{\bf \large Example}[section]
\title{\textbf Para-Blaschke isoparametric spacelike hypersurfaces in  Lorentzian space forms}
\author {{ Xiu Ji,~~~Tongzhu Li,~~~Huafei Sun} \\
\small{Department of Mathematics, Beijing Institute of
Technology,} \\
\small{Beijing, 100081, China.}\\
\small{Beijing Key Laboratory on MCAACI,}
\small{Beijing,100081,China.} \\
\small{ E-mail: jixiu1106@163.com,~~litz@bit.edu.cn,~~huafeisun@bit.edu.cn.}}
\date{}
\begin{document}
\maketitle
\begin{abstract}
Let $M^n$ be an $n$-dimensional umbilic-free hypersurface in the $(n+1)$-dimensional Lorentzian
space form $M^{n+1}_1(c)$. Three  basic invariants of $M^n$ under the
conformal transformation group of $M^{n+1}_1(c)$ are a $1$-form $C$, called conformal $1$-form, a symmetric
$(0,2)$ tensor $B$, called conformal second fundamental form, and a
symmetric $(0,2)$ tensor $A$, called Blaschke tensor. The so-called para-Blaschke tensor $D^{\lambda}=A+\lambda B$, the linear combination of $A$ and $B$,
is still a symmetric $(0,2)$ tensor. A spacelike hypersurface is called a para-Blaschke isoparametric spacelike hypersurface, if the conform $1$-form vanishes and
the eigenvalues of the
para-Blaschke tensor are constant.
In this paper, we classify the para-Blaschke isoparametric spacelike hypersurfaces under the conformal group of $M^{n+1}_1(c)$.
\end{abstract}

\medskip\noindent
{\bf 2000 Mathematics Subject Classification:} 53A30, 53B25.
\par\noindent {\bf Key words:} Blaschke tnesor,
para-Blaschke tensor, para-Blaschke isoparametric hypersurface, conformal isoparametric hypersurface.

\vskip 1 cm
\section{Introduction}
\par\medskip
Recently the M\"{o}bius geometry of submanifolds in Riemannian space forms has been studied extensively and a lot of
interesting results have been obtained. Especially,  Many special hypersurfaces were classified under M\"{o}bius transformation group (for example, \cite{cheng},\cite{guo},\cite{guo1},\cite{hu},\cite{hu1},\cite{lih},\cite{lih3},\cite{lix2},\cite{lix3}).
As its parallel generalization, the conformal geometry of submanifolds in Lorentzian space forms is another important branch of
conformal geometry, but there are less results in  Lorentzian space forms than in Riemannian space forms.
In this paper, we study the para-Blaschke isoparametric spacelike hypersurfaces in Lorentzian space forms.

Let $ \mathbb{R}^{n+2}_s$ be the real vector space $ \mathbb{R}^{n+2}$ with the
Lorentzian  product $\langle,\rangle_s$ given by
$$\langle X,Y\rangle_s=-\sum_{i=1}^sx_iy_i+\sum_{j=s+1}^{n+2}x_jy_j.$$
 For any $a>0$, the standard sphere $\mathbb{S}^{n+1}(a)$, the hyperbolic space $\mathbb{H}^{n+1}(-a)$,
the de sitter space $\mathbb{S}^{n+1}_1(a)$ and the anti-de sitter space $\mathbb{H}^{n+1}_1(-a)$ are defined by
\begin{equation*}
\begin{split}
\mathbb{S}^{n+1}(a)=\{x\in \mathbb{R}^{n+2}|x\cdot x=a^2\},~~\mathbb{H}^{n+1}(-a)=\{x\in \mathbb{R}^{n+2}_1|\langle x,x\rangle_1=-a^2\},\\
\mathbb{S}^{n+1}_1(a)=\{x\in \mathbb{R}^{n+2}_1|\langle x,x\rangle_1=a^2\},~~ \mathbb{H}^{n+1}_1(-a)=\{x\in
\mathbb{R}^{n+2}_2|\langle x,x\rangle_2=-a^2\}.
\end{split}
\end{equation*}
Let $M^{n+1}_1(c)$ be a Lorentzian space form. When $c=0$,
$M^{n+1}_1(c)=\mathbb{R}^{n+1}_1$. When $c=1$, $M^{n+1}_1(c)=\mathbb{S}^{n+1}_1(1)$.
When $c=-1$, $M^{n+1}_1(c)=\mathbb{H}^{n+1}_1(-1)$.

For Lorentzian space forms $M^{n+1}_1(c)$, there exists a united conformal
compactification $\mathbb Q^{n+1}_1$, which is the projectivized
light cone in $\mathbb RP^{n+2}$ induced from $\mathbb{R}^{n+3}_2$. Using the conformal compactification $\mathbb Q^{n+1}_1$,
we study the conformal geometry of spacelike hypersurfaces in $M_1^{n+1}(c)$. We define the conformal metric $g$ and  the conformal second
fundamental form $B$ on an umbilic-free spacelike hypersurface, which determine the spacelike hypersurface up
to a conformal transformation of $M^{n+1}_1(c)$. Another two conformal invariants are the conformal $1$-form $C$ and the Blaschke tensor $A$ (see Sect.2).

Since  $A$ and $B$ are symmetric $(0,2)$-tensor, their eigenvalues are real. We define two kind of special spacelike hypersurfaces: the conformal isoparametric
spacelike  hypersurfaces and the Blaschke isoparametric spacelike hypersurfaces. A spacelike hypersurface is called a conformal isoparametric spacelike hypersurface, if it satisfies two conditions: (1) $C = 0,$ (2) all the eigenvalues of $B$ are constant. Similarly, we define the Blaschke isoparametric
spacelike hypersurface by another symmetric tensor, the Blaschke tensor $A$.
The para-Blaschke tensor defined by $D^{\lambda}:=A+\lambda B$ for some constant $\lambda$. Clearly the para-Blaschke tensor is still a symmetric $(0,2)$ tensor, thus its eigenvalues are real. Using the para-Blaschke tensor, we can define similarly the para-Blaschke isoparametric spacelike hypersurface.

Recently, some interesting resultes on the spacelike hypersurfaces with some special conformal invariants are obtained.
C. X.  Nie et al. classified the spacelike  hypersurfaces with parallel conformal second fundamental form in \cite{nie2}, and classified the Blaschke
 isoparametric  spacelike hypersurfaces with two distinct principal curvatures in \cite{nie1}. X. X. Li et al. classified the spacelike  hypersurfaces with parallel Blaschke
tensor in \cite{lix1} and the spacelike hypersurfaces with with parallel para-Blaschke
tensor in \cite{lix4}.  T.Z. Li and C.X. Nie classified completely the conformal isoparametric spacelike hypersurfaces in \cite{lin1}. Clearly if the para-blaschke tensor of a spacelike hypersurface is parallel, then the spacelike hypersurface is para-Blaschke isoparametric.
In this paper, we prove that a para-Blaschke isoparametric spacelike  hypersurface is a conformal
isoparametric spacelike  hypersurface provided that the para-Blaschke tensor has more than two distinct
eigenvalues. Simultaneously, we classify completely the para-Blaschke isoparametric spacelike  hypersurfaces.
Our main theorems are as follows.
\begin{theorem}\label{theorem1}
Let $x:M^n\to M^{n+1}_1(c), n\geq 2,$ be an umbilic-free spacelike hypersurface in an $(n+1)$-dimensional Lorentzian
space form $M^{n+1}_1(c)$. We assume that the conformal $1$-form of $x$ vanishes. Then we have\\
(1),If the spacelike hypersurface is a conformal isoparametric spacelike hypersurface, then the spacelike hypersurface is also a para-Blaschke isoparametric spacelike  hypersurface.\\
(2),If the spacelike hypersurface is  a para-Blaschke isoparametric spacelike  hypersurface and the number
of the distinct eigenvalues of the para-Blaschke tensor $D^{\lambda}$ is more than two,
then the spacelike hypersurface is also a conformal isoparametric spacelike hypersurface.
\end{theorem}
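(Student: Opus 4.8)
\noindent
The plan is to run everything through the integrability equations for the conformal invariants and to exploit $C=0$ to diagonalize. Recall that $A$, $B$, $C$ satisfy Codazzi-type relations of the form $A_{ij,k}-A_{ik,j}=B_{ik}C_j-B_{ij}C_k$ and $B_{ij,k}-B_{ik,j}=\delta_{ik}C_j-\delta_{ij}C_k$, together with a Ricci-type relation $C_{i,j}-C_{j,i}=\sum_k(B_{ik}A_{kj}-A_{ik}B_{kj})$. The hypothesis $C=0$ collapses these to three facts, independent of sign conventions: (a) $\nabla A$ and $\nabla B$, hence $\nabla D^{\lambda}$, are totally symmetric $(0,3)$-tensors; (b) $[A,B]=0$, so $A$, $B$ and $D^{\lambda}=A+\lambda B$ are simultaneously diagonalizable; and (c) since $\operatorname{tr}B=0$, the traced Gauss equation expresses the Ricci tensor of the conformal metric $g$ in terms of $A$ and $B^2$. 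I would then pass to the open dense set $M_0$ where the numbers of distinct eigenvalues of $B$ and of $D^{\lambda}$ are locally constant, and choose there a smooth orthonormal frame $\{E_i\}$ diagonalizing all three tensors, writing $B=\operatorname{diag}(b_i)$, $A=\operatorname{diag}(a_i)$, $D^{\lambda}=\operatorname{diag}(d_i)$ with $d_i=a_i+\lambda b_i$.

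The computational engine is the coordinate form of total symmetry: for any symmetric tensor $T=\operatorname{diag}(\mu_i)$ with $\nabla T$ totally symmetric one finds $T_{ij,k}=E_k(\mu_i)\delta_{ij}+(\mu_j-\mu_i)\,\omega_{ij}(E_k)$, and the Codazzi symmetry $T_{ij,k}=T_{ik,j}$ yields, for $k\neq i$, the master relation $E_k(\mu_i)=(\mu_k-\mu_i)\,\omega_{ik}(E_i)$. Applied in turn to $B$, to $A$ and to $D^{\lambda}$ this gives $E_k(b_i)=(b_k-b_i)\omega_{ik}(E_i)$, $E_k(a_i)=(a_k-a_i)\omega_{ik}(E_i)$ and $E_k(d_i)=(d_k-d_i)\omega_{ik}(E_i)$. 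The strategy is to feed the constancy of one family of eigenvalues into these identities so as to annihilate connection coefficients, and then read off the constancy of the other eigenvalues.

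For part (1) assume $b_i\equiv \text{const}$. The $B$-relation becomes $0=(b_k-b_i)\omega_{ik}(E_i)$, forcing $\omega_{ik}(E_i)=0$ whenever $b_k\neq b_i$; substituting into the $A$-relation gives $E_k(a_i)=0$ for every direction $E_k$ transverse to the $B$-eigenspace of $E_i$. It remains to control $a_i$ inside a fixed $B$-eigenspace $W_{\beta}$, on which $B=\beta\,\mathrm{Id}$ carries no first-order information; here I would use that $A$ is a Codazzi tensor, so that an $A$-eigenvalue of multiplicity $\geq 2$ is automatically constant along its eigendistribution, and dispose of the remaining one-dimensional directions by the second-order integrability coming from the Ricci identity for $\nabla B$ with the curvature replaced by its Gauss expression in $A$ and $B$. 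This yields all $a_i$ constant, hence all $d_i=a_i+\lambda b_i$ constant for every $\lambda$, so the hypersurface is para-Blaschke isoparametric. For part (2) assume instead $d_i\equiv\text{const}$. Now the $D^{\lambda}$-relation gives $0=(d_k-d_i)\omega_{ik}(E_i)$, so $\omega_{ik}(E_i)=0$ whenever $d_k\neq d_i$, and the $B$-relation then forces $E_k(b_i)=0$ for every $E_k$ transverse to the $D^{\lambda}$-eigenspace $V_{d_i}$; thus each $b_i$ can vary only inside its own eigenspace $V_{d_i}$. To rule this out I would exploit the hypothesis of at least three distinct eigenvalues: a third eigenspace $V_{d_m}$ disjoint from $V_{d_i}$ supplies index triples $(i,j,m)$ with pairwise distinct $d$-values, and the remaining Codazzi identities $B_{ij,k}=B_{kj,i}$ for such triples, combined with the Ricci identity for $\nabla B$ and the Gauss equation, overdetermine the outstanding connection coefficients and force the intra-eigenspace derivatives $E_k(b_i)$ to vanish as well. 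Hence $B$ has constant eigenvalues and the hypersurface is conformal isoparametric.

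The first-order bookkeeping in the eigenframe is routine; the genuine difficulty in both parts is confined to the intra-eigenspace directions, where Codazzi symmetry alone gives no information and one must invoke the second-order Gauss and Ricci identities. In part (2) this is precisely where the assumption of more than two distinct eigenvalues is indispensable: with only two eigenvalues of $D^{\lambda}$ there are too few index triples with pairwise distinct eigenvalues to overdetermine the connection, the intra-eigenspace system admits non-constant solutions for $b_i$, and the known two-eigenvalue examples of para-Blaschke (but not conformal) isoparametric hypersurfaces survive. Showing that three eigenvalues really do close this system---so that no non-constant $b_i$ can persist---is the step I expect to demand the longest and most delicate computation.
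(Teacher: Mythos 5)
Your first-order scaffolding is exactly the paper's: with $C=0$ one diagonalizes $A$, $B$, $D^{\lambda}$ simultaneously, gets $(d_i-d_j)\omega_{ij}=\sum_k D^{\lambda}_{ij,k}\omega_k$, and deduces (the paper's (\ref{der5})) that $E_j(b_i)=0$ whenever $[j]\neq[i]$, where $[i]=\{m\,|\,d_m=d_i\}$. But for part (2) the step you defer as ``the longest and most delicate computation'' is the entire content of the theorem, and the mechanism you guess at --- that Codazzi triples plus Ricci and Gauss ``overdetermine the outstanding connection coefficients'' --- is not how it closes: the intra-class coefficients $\omega_{ij}(E_k)$ with $[i]=[j]$ are never controlled at all. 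Instead the paper runs, for each fixed $i$, a dichotomy. Case 1: some $D^{\lambda}_{ij,k}\neq 0$ with $[i],[j],[k]$ pairwise distinct; then (\ref{der4}) and total symmetry of $\nabla D^{\lambda}$ and $\nabla B$ give $\frac{b_i-b_j}{d_i-d_j}=\frac{B_{ij,k}}{D^{\lambda}_{ij,k}}=\frac{B_{jk,i}}{D^{\lambda}_{jk,i}}=\frac{b_k-b_j}{d_k-d_j}$, so $b_i=(b_k-b_j)\frac{d_i-d_j}{d_k-d_j}+b_j$. Case 2: $D^{\lambda}_{ij,k}=0$ for all $j,k$; then the Ricci identity applied to $\nabla D^{\lambda}$ yields (\ref{der3}), hence $R_{ijij}=0$ for $j\notin[i]$, the Gauss equation (\ref{stru4}) gives $-b_ib_j+a_i+a_j=0$, and writing $a_i=d_i-\lambda b_i$ for \emph{two} other classes $[j]\neq[k]$ and subtracting gives $-(b_i+\lambda)(b_j-b_k)=d_k-d_j$, i.e.\ $b_i=\frac{d_j-d_k}{b_j-b_k}+\lambda$. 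In both cases $b_i$ is an explicit algebraic function of the constants $d_m$ and of $b_j,b_k$ with $[j],[k]\neq[i]$, which by (\ref{der5}) are already constant along $[i]$-directions; hence $E_l(b_i)=0$ for $l\in[i]$. The hypothesis of more than two eigenvalues enters precisely (and only) in Case 2, to furnish the second equation that eliminates the $a$'s --- not, as you suggest, by supplying enough triples to pin down the connection. Without this dichotomy your part (2) is a statement of intent, not a proof.

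Part (1) has the same status but worse, because the route you sketch is not the paper's and is genuinely gapped where you wave at it. Your Codazzi multiplicity lemma for $A$ only controls $a_i$ inside a joint $(A,B)$-eigenspace of dimension $\geq 2$; it says nothing about directions $E_k$ lying in the same $B$-eigenspace as $E_i$ but with $a_k\neq a_i$, and the (\ref{der3})-type identity computed from $\nabla B$ produces curvature information only \emph{between} distinct $B$-classes, so your ``second-order integrability'' does not obviously reach the intra-class directions. The paper does not attempt any such local computation for (1): it quotes the classification of conformal isoparametric spacelike hypersurfaces from \cite{lin1} (Theorems \ref{th1} and \ref{th2}) and verifies in Examples \ref{ex1}--\ref{ex4} that the Blaschke eigenvalues, hence all $d_i=a_i+\lambda b_i$, are constant. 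So the honest summary is: your frame computations reproduce the paper's preliminaries (\ref{der1})--(\ref{der5}), but the decisive arguments --- the Case 1/Case 2 dichotomy in part (2), and the classification-plus-examples verification (or a completed local argument) in part (1) --- are missing.
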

\begin{theorem}\label{theorem2}
Let $x:M^n\to M^{n+1}_1(c), n\geq 2,$ be an umbilic-free spacelike hypersurface in an $(n+1)$-dimensional Lorentzian
space form $M^{n+1}_1(c)$. If the hypersurface is para-Blaschke isoparametric, then $x$ is locally conformal equivalent to one of the following hypersurfaces:\\
(1), the spacelike hypersurfaces with constant mean curvature
and constant scalar curvature in $M^{n+1}_1(c)$;\\
(2), $\mathbb{S}^k(\sqrt{a^2+1})\times\mathbb{H}^{n-k}(-a)\subset\mathbb{S}^{n+1}_1(1), ~~a>0, ~1\leq k\leq n-1;$\\
(3), $\mathbb{H}^k(-a)\times\mathbb{H}^{n-k}(-\sqrt{1-a^2})\subset\mathbb{H}^{n+1}_1(-1),~~0<a<1,~1\leq k\leq n-1;$\\
(4), $\mathbb{H}^k(-a)\times\mathbb{R}^{n-k}
\subset\mathbb{R}^{n+1}_1,~~a>0,~0\leq k\leq n-1;$\\
(5),$
 x:\mathbb H^{q}(-\sqrt{a^2-1})\times\mathbb S^{p}(a)\times
 \mathbb R^+\times\mathbb R^{n-p-q-1}\rightarrow
 \mathbb R^{n+1}_1 ,$ defined by $$
  x(u',u'',t,u''')=(tu',tu'',u'''),$$ where $u'\in \mathbb H^{q}(-\sqrt{a^2-1}), u''\in\mathbb S^{p}(a),u'''\in\mathbb R^{n-p-q-1},~~a>1;$\\
(6) the spacelike hypersurfaces defined by Example \ref{ex5} (see Sect.3);\\
(7) the spacelike hypersurfaces defined by Example \ref{ex6} (see Sect.3).
\end{theorem}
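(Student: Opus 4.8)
The plan is to use Theorem \ref{theorem1} to separate the para-Blaschke isoparametric hypersurfaces into those that are conformal isoparametric, which are already classified, and those that are not, for which the number of distinct para-Blaschke eigenvalues is forced to be small and can be treated directly.

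First I would extract the structural consequences of the hypothesis. A para-Blaschke isoparametric hypersurface has vanishing conformal $1$-form $C=0$, so the conformal Codazzi equations make $A$ a Codazzi tensor and force $A$ and $B$ to commute; consequently $A$, $B$ and $D^{\lambda}=A+\lambda B$ share a common orthonormal eigenframe $\{E_i\}$. Writing $a_i$, $b_i$ and $d_i=a_i+\lambda b_i$ for the eigenvalues, the para-Blaschke isoparametric condition says each $d_i$ is constant, while the normalization $\operatorname{tr}B=0$, $\operatorname{tr}(B^2)=\tfrac{n-1}{n}$ records that $B\neq0$ (umbilic-freeness). I will organize the classification around whether or not $x$ is conformal isoparametric, i.e. whether the conformal principal curvatures $b_i$ are all constant.

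If $x$ is conformal isoparametric, then by Theorem \ref{theorem1}(1) it is in particular para-Blaschke isoparametric, and such hypersurfaces have been completely classified by Li--Nie in \cite{lin1}; matching that list against the invariants $\{a_i,b_i\}$ yields the product models (2)--(5) together with the conformal isoparametric members of Examples \ref{ex5} and \ref{ex6}. The remaining, and substantive, situation is when $x$ is para-Blaschke isoparametric but \emph{not} conformal isoparametric. By the contrapositive of Theorem \ref{theorem1}(2), the para-Blaschke tensor then has at most two distinct eigenvalues. If there is a single eigenvalue, $d_i\equiv d$, then $A=d\,g-\lambda B$, so $A$ is completely determined by $B$; substituting this into the conformal structure equations and re-expressing $\operatorname{tr}A$ and $\operatorname{tr}(B^2)$ through the mean curvature and scalar curvature of the underlying immersion into $M^{n+1}_1(c)$, I expect to deduce that both are constant, which places $x$ in case (1). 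If there are exactly two distinct eigenvalues, the two eigen-distributions of $D^{\lambda}$ are integrable, and along their leaves the individual $b_i$ (hence $a_i$) may vary subject only to $a_i+\lambda b_i=\text{const}$; differentiating the Codazzi equations and using $C=0$ produces an ODE system for the conformal principal curvatures whose integration, read back through the conformal embedding $M^{n+1}_1(c)\hookrightarrow\mathbb{Q}^{n+1}_1$, reconstructs precisely the hypersurfaces of Examples \ref{ex5} and \ref{ex6}, i.e. cases (6)--(7).

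Finally I would carry out the converse check that each listed hypersurface is genuinely para-Blaschke isoparametric, computing $C$, $A$ and $B$ for the explicit models. The main obstacle is the two-eigenvalue, non-conformal-isoparametric case: here Theorem \ref{theorem1} gives no reduction, the conformal principal curvatures genuinely vary, and one must control their evolution along the low-dimensional eigen-distributions and integrate the structure equations explicitly enough to recognize Examples \ref{ex5} and \ref{ex6}. This integration, rather than any step of the case division, is where the real work lies.
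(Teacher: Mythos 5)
Your top-level case division is sound and essentially matches the paper's: Theorem \ref{theorem1}(1) plus the Li--Nie classification (Theorems \ref{th1}, \ref{th2}) disposes of the conformal isoparametric branch, the contrapositive of Theorem \ref{theorem1}(2) caps the remaining case at two distinct eigenvalues of $D^{\lambda}$, and the one-eigenvalue case is exactly the hypothesis $C=0$, $A=\mu g-\lambda B$ of Theorem \ref{abc1} (which you could simply cite rather than re-derive), giving case (1). Up to this point your plan is correct.

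The genuine gap is in the two-eigenvalue, non-conformal-isoparametric case, precisely where you acknowledge ``the real work lies,'' and your proposed mechanism there would not succeed. You suggest deriving an ODE system for the conformal principal curvatures along the eigen-distributions and integrating it to recognize Examples \ref{ex5} and \ref{ex6}. But those examples contain an \emph{arbitrary} spacelike hypersurface $y_1$ with constant mean curvature and constant scalar curvature in $\mathbb{S}^{k+1}_1(r)$ or $\mathbb{H}^{k+1}_1(-r)$ --- an infinite-dimensional family --- so no finite ODE system in the $b_i$ can characterize them; the $b_i$ on the $M_1$ factor are not determined by the constraints $d_i=a_i+\lambda b_i=\mathrm{const}$ at all. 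The paper's route is structural rather than dynamical: since $C=0$ makes both $A$ and $B$ Codazzi tensors (by (\ref{stru1}), (\ref{stru2})), $D^{\lambda}_{ij,k}$ is totally symmetric, and with exactly two \emph{constant} eigenvalues this forces $D^{\lambda}_{ij,k}\equiv 0$, i.e.\ $D^{\lambda}$ is parallel. Hence $\omega_{p\alpha}=0$ and $(M,g)$ splits locally as a Riemannian product $(M_1,g_1)\times(M_2,g_2)$; the vanishing of the mixed curvatures $R_{p\alpha p\alpha}$ in the Gauss equation (\ref{stru4}) then yields $-(b_p+\lambda)(b_\alpha+\lambda)+\lambda^2+\mu+\nu=0$, producing a dichotomy: either $B$ has exactly two distinct eigenvalues, in which case Theorem \ref{th1} (which needs only $C=0$ and two distinct conformal principal curvatures, \emph{not} their constancy) gives the products (2)--(4); or one factor is $B$-umbilic with eigenvalue $-\lambda$ and $\lambda^2+\mu+\nu=0$, in which case $M_2$ is a space form and the shifted tensor $\tilde h=B|_{M_1}+\lambda g_1$ is a Codazzi tensor satisfying the Gauss equation of $\mathbb{S}^{m_1+1}_1(r)$ or $\mathbb{H}^{m_1+1}_1(-r)$; the fundamental theorem for spacelike hypersurfaces then \emph{realizes} $\tilde h$ as the second fundamental form of some $u:N^{m_1}\to\mathbb{S}^{m_1+1}_1(r)$ (resp.\ $\mathbb{H}^{m_1+1}_1(-r)$) with the constant $H_1$, $R_1$ computed from (\ref{cond1}), recovering Examples \ref{ex5} and \ref{ex6}. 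Your proposal is missing all four of these steps --- parallelism of $D^{\lambda}$, the product splitting, the Gauss-equation dichotomy, and the realization argument --- and the ODE-integration substitute you offer for them cannot close the case.
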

When $\lambda=0$, $D^{\lambda}=A$. Theorem \ref{theorem2} implies
that the conformal isoparametric spacelike hypersurfaces and the
Blaschke isoparametric spacelike hypersurfaces are almost equivalent.
Therefore from the results in \cite{lin1}, we have the following results.
\begin{corollary}\label{corol}
Let $x:M^n\to M^{n+1}_1(c), n\geq 2,$ be an umbilic-free spacelike hypersurface in the $(n+1)$-dimensional Lorentzian
space form $M^{n+1}_1(c)$ with $r$ distinct eigenvalues of the Blaschke tensor. If the hypersurface is  Blaschke isoparametric and $r\geq 3$, then $r=3$ and $x$ is locally conformal equivalent to the following spacelike hypersurface:
$$ x:\mathbb H^{q}(-\sqrt{a^2-1})\times\mathbb S^{p}(a)\times
 \mathbb R^+\times\mathbb R^{n-p-q-1}\rightarrow
 \mathbb R^{n+1}_1,$$ defined by $
  x(u',u'',t,u''')=(tu',tu'',u'''),$ where $u'\in \mathbb H^{q}(-\sqrt{a^2-1}), u''\in\mathbb S^{p}(a),u'''\in\mathbb R^{n-p-q-1},~~a>1.$
\end{corollary}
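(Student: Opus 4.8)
The plan is to obtain the corollary as a short consequence of Theorem \ref{theorem1} and the classification of conformal isoparametric hypersurfaces in \cite{lin1}, specialized to $\lambda = 0$. The first observation is that at $\lambda = 0$ the para-Blaschke tensor $D^{\lambda} = A + \lambda B$ is just the Blaschke tensor $A$, so a Blaschke isoparametric hypersurface (for which $C = 0$ and the eigenvalues of $A$ are constant) is precisely a para-Blaschke isoparametric hypersurface with $\lambda = 0$. Since the hypothesis provides $r \geq 3 > 2$ distinct eigenvalues of $A = D^{0}$, part (2) of Theorem \ref{theorem1} applies immediately and tells us that $x$ is conformal isoparametric: $C = 0$ and all eigenvalues of the conformal second fundamental form $B$ are constant.

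With $x$ now known to be conformal isoparametric, I would invoke the complete list of such hypersurfaces from \cite{lin1} and decide, model by model, how many distinct eigenvalues the Blaschke tensor $A$ can carry. Because $C = 0$ makes $A$ and $B$ simultaneously diagonalizable and because each model is built from internally homogeneous pieces (space-form factors, a radial direction, flat directions), $A$ is constant along each such piece; hence the number of distinct Blaschke eigenvalues is bounded by the number of homogeneous blocks and can be read off block by block from the explicit frames of \cite{lin1}. The two-factor product models — cases (2), (3), (4) of Theorem \ref{theorem2} — have only two blocks, so $r \leq 2$ there; the same calculation disposes of case (1) and of the sporadic families of Examples (6) and (7), none of which reaches three distinct Blaschke eigenvalues. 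The hypothesis $r \geq 3$ therefore excludes every model except one.

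The surviving model is the cone of case (5), $x(u',u'',t,u''') = (tu',tu'',u''')$ over $\mathbb H^{q}(-\sqrt{a^2-1}) \times \mathbb S^{p}(a)$ with $a > 1$. Its tangent bundle splits into the $\mathbb H^{q}$ directions, the $\mathbb S^{p}$ directions, and the remaining radial-and-flat directions $\mathbb R^{+}\times\mathbb R^{n-p-q-1}$, and a direct evaluation of $A$ on these blocks yields three pairwise distinct constant eigenvalues. Thus the cone is the unique conformal isoparametric model compatible with $r \geq 3$, and on it $r = 3$ exactly; this yields both conclusions of the corollary, namely $r = 3$ and that $x$ is locally conformal equivalent to this cone.

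The main obstacle is the explicit Blaschke eigenvalue count on each model: one needs the precise values of $A$ in the conformal frames of \cite{lin1} in order to verify simultaneously that no conformal isoparametric model has more than three distinct Blaschke eigenvalues and that exactly three occur only for the cone. Everything else — the reduction through Theorem \ref{theorem1}(2) and the appeal to the classification — is formal, so the computational verification of these eigenvalue counts is where the real work lies.
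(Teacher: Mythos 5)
Your proposal is correct and follows essentially the same route as the paper: the paper obtains the corollary by setting $\lambda=0$ in Theorem \ref{theorem2}, whose $r\geq 3$ case (Proposition \ref{pro2}) is precisely your reduction --- Theorem \ref{theorem1}(2) to get conformal isoparametricity, then the classification of \cite{lin1} (Theorems \ref{th1} and \ref{th2}) together with the block eigenvalue computations of Examples \ref{ex1}--\ref{ex4} to rule out the two-eigenvalue product models and identify the cone, whose Blaschke tensor indeed has three pairwise distinct constant eigenvalues. The only cosmetic slip is that cases (1), (6), (7) of Theorem \ref{theorem2} are not part of the conformal isoparametric list of \cite{lin1} (that list consists solely of the three product families and the cone), so your extra check of those models is redundant rather than necessary.
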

This paper is organized as follows. In section 2, we study the conformal geometry of spacelike hypersurfaces in $M_1^{n+1}(c)$. In section 3, we
give  some examples of  special  spacelike hypersurfaces. In
section 4  we give the proof of our main theorems.

\par\noindent
\section{Conformal geometry of spacelike Hypersurfaces}
\par\medskip
In this section, following Wang's idea in paper \cite{w}, we define some conformal invariants on a spacelike  hypersurface
and give a congruent theorem of the spacelike hypersurfaces under the conformal
group of $M^{n+1}_1(c)$.

We denote by $C^{n+2}$ the cone in $\mathbb R^{n+3}_2$ and by
$\mathbb Q^{n+1}_1$ the conformal
compactification space in $\mathbb RP^{n+2}$,
$$C^{n+2}=\{X\in\mathbb R^{n+3}_2|\langle X,X\rangle_2=0,X\neq0\},$$
$$\mathbb Q^{n+1}_1=\{[X]\in\mathbb R P^{n+2}|\langle X,X\rangle_2=0\}.$$
Let $O(n+3,2)$ be the Lorentzian group of $\mathbb{R}^{n+3}_2$ keeping the
Lorentzian product $\langle X,Y\rangle_2$ invariant. Then $O(n+3,2)$ is a
transformation group on $\mathbb Q^{n+1}_1$ defined by
\begin{equation*}
T([X])=[XT], ~~~X\in C^{n+2}, ~~~T\in O(n+3,2).
\end{equation*}
Topologically $\mathbb Q ^{n+1}_1$ is identified with the compact
space $S^n\times S^1/S^0$, which is endowed by a standard Lorentzian
metric $h=g_{S^n}\oplus(-g_{S^1})$, where $g_{S^k}$ denotes the standard metric of the $k$-dimensional sphere $S^k$.  Then $\mathbb Q^{n+1}_1$ has
conformal metric $$[h]=\{e^\tau h|\tau\in C^ \infty(\mathbb
Q^{n+1}_1)\}$$ and $[O(n+3,2)]$ is the conformal transformation group of
$\mathbb Q^{n+1}_1$(see\cite{cahne,o}).

Denoting $P=\{[X]\in \mathbb Q ^{n+1}_1|x_1=x_{n+2}\},~~P_-=\{[X]\in \mathbb Q ^{n+1}_1|x_{n+2}=0\},~~P_+=\{[X]\in \mathbb Q ^{n+1}_1|x_1=0\}$,
we can define the following conformal diffeomorphisms,
\begin{equation*}
\begin{array}{l}
 \sigma_0 :
\mathbb{R}^{n+1}_1\rightarrow {\mathbb Q}^{n+1}_1\backslash P,~~~~\quad u\mapsto[( \frac{< u,
u>_1+1}{2},u, \frac{<u, u>_1-1}{2})],
   \\
\sigma_1: \mathbb{S}^{n+1}_1(1)\rightarrow  {\mathbb Q}^{n+1}_1\backslash P_+ ,~~\quad
 u\mapsto [(1,u)],  \\
\sigma_{-1}:  \mathbb{H}^{n+1}_1(-1)\rightarrow {\mathbb Q}^{n+1}_1\backslash P_-,~\quad
  u\mapsto [(u,1)]. \\
   \end{array}
\end{equation*}
We may regard $\mathbb {Q}^{n+1}_1$ as the common compactification of $\mathbb{R}^{n+1}_1, \mathbb{S}^{n+1}_1(1), \mathbb{H}^{n+1}_1(-1)$.

Let $x:M^n\rightarrow M^{n+1}_1(c)$ be a spacelike hypersurface. Using $\sigma_c$, we obtain the hypersurface
in $\mathbb{Q}^{n+1}_1$, $\sigma_c\circ x:M^n\rightarrow \mathbb {Q}^{n+1}_1$.  From \cite{cahne}, we have the following theorem.
\begin{theorem}
Two hypersurfaces $x,\bar{x}:M^n\rightarrow M^{n+1}_1(c)$ are
conformally equivalent if and only if there exists $T\in O(n+3,2)$ such
that $\sigma_c\circ x=T(\sigma_c\circ \bar{x}):M^n\rightarrow \mathbb
Q^{n+1}_1$.
\end{theorem}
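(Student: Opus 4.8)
The plan is to exploit the fact that the three Lorentzian space forms are glued into the single conformal compactification $\mathbb{Q}^{n+1}_1$ by the conformal diffeomorphisms $\sigma_c$, and that the conformal transformation group of $\mathbb{Q}^{n+1}_1$ is exactly $[O(n+3,2)]$. First I would record the basic fact, verifiable by a direct computation, that each $\sigma_c$ is a conformal diffeomorphism of $M^{n+1}_1(c)$ onto an open dense subset of $(\mathbb{Q}^{n+1}_1,[h])$, namely $\mathbb{Q}^{n+1}_1\setminus P$, $\mathbb{Q}^{n+1}_1\setminus P_+$, $\mathbb{Q}^{n+1}_1\setminus P_-$ for $c=0,1,-1$ respectively; in particular $\sigma_c$ pulls back the conformal class $[h]$ to the conformal class of the space form metric. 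This reduces the statement to a compatibility between two group actions across the single map $\sigma_c$.

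For the forward direction, suppose $x$ and $\bar{x}$ are conformally equivalent, so there is a conformal transformation $\phi$ of $M^{n+1}_1(c)$ with $x=\phi\circ\bar{x}$. Conjugating by $\sigma_c$ produces a conformal transformation $\sigma_c\circ\phi\circ\sigma_c^{-1}$ of the open dense subset $\sigma_c(M^{n+1}_1(c))\subset\mathbb{Q}^{n+1}_1$. The key input (a Lorentzian Liouville-type theorem, which is the substance of \cite{cahne}) is that every conformal transformation of this dense subset extends uniquely to a global conformal transformation of $\mathbb{Q}^{n+1}_1$, and that the conformal transformation group of $\mathbb{Q}^{n+1}_1$ coincides with $[O(n+3,2)]$. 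Hence $\sigma_c\circ\phi\circ\sigma_c^{-1}$ is induced by some $T\in O(n+3,2)$, and then $\sigma_c\circ x=\sigma_c\circ\phi\circ\bar{x}=T(\sigma_c\circ\bar{x})$, as required.

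Conversely, suppose $T\in O(n+3,2)$ satisfies $\sigma_c\circ x=T(\sigma_c\circ\bar{x})$. Since $T$ acts as a conformal transformation of $(\mathbb{Q}^{n+1}_1,[h])$ and both $\sigma_c\circ x$ and $\sigma_c\circ\bar{x}$ take values in the dense chart $\sigma_c(M^{n+1}_1(c))$, the composition $\phi:=\sigma_c^{-1}\circ T\circ\sigma_c$ is defined and conformal on the open set of $M^{n+1}_1(c)$ on which it lands back in the chart. Pulling the relation back through $\sigma_c^{-1}$ gives $x=\phi\circ\bar{x}$ there, exhibiting $x$ and $\bar{x}$ as (locally) conformally equivalent.

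The main obstacle is the boundary/extension issue hidden in the Liouville step: $\sigma_c$ is only a diffeomorphism onto a dense chart, and a general $T\in O(n+3,2)$ need not preserve that chart, so one cannot naively invert $\sigma_c$ everywhere. The real content is therefore the rigidity statement that the conformal automorphism group of the compact model $\mathbb{Q}^{n+1}_1$ is precisely $[O(n+3,2)]$ and that conformal maps of the space forms extend across the boundary $P$ (or $P_\pm$) to this group; once this is granted, the equivalence is a formal consequence of the conformality of $\sigma_c$. Since the theorem is quoted from \cite{cahne}, I would cite that rigidity result rather than reprove it, and keep the argument at the level of transporting the two group actions across $\sigma_c$.
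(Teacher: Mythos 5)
Your proposal is correct and takes essentially the same route as the paper: the paper gives no independent argument, simply quoting the result from \cite{cahne}, and your proof---conjugating through the conformal embedding $\sigma_c$ and invoking the Liouville-type rigidity that the conformal transformation group of $(\mathbb{Q}^{n+1}_1,[h])$ is exactly $[O(n+3,2)]$, with conformal maps of the space forms extending across $P$ (resp.\ $P_\pm$)---is precisely the standard reasoning behind that citation. You also correctly isolate the only genuine subtlety (the charts $\sigma_c$ are dense and a general $T$ need not preserve them, so ``conformally equivalent'' must be read via locally/densely defined conformal transformations), which is exactly the content the paper delegates to \cite{cahne}.
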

Since $x:M^n\rightarrow M^{n+1}_1(c)$ is a spacelike hypersurface,  $(\sigma_c\circ x)_*(TM^n)$ is a positive
definite subbundle of $T{\mathbb Q}^{n+1}_1$. For any local lift $Z$
of the standard projection $\pi: C^{n+2}\rightarrow
\mathbb{Q}^{n+1}_1$, we get a local lift $y=Z\circ\sigma_c\circ
x:U\rightarrow C^{n+1}$  of $\sigma_c\circ x:
M\rightarrow{\mathbb Q}^{n+1}_1$ in an open subset $U$ of
$M^n$. Thus $\langle\text dy, \text dy\rangle_2=\rho^2 \langle dx, dx\rangle_s$ is a local metric, where $\rho\in C^{\infty}(U)$. We denote by $\Delta$ and $\kappa$ the Laplacian
operator and the normalized scalar curvature with respect to the local positive
definite
metric $\langle\text dy, \text dy\rangle$, respectively.  Similar to Wang's proof of Theorem 1.2 in \cite{w},  we can get
the following theorem.
\begin{theorem}\label{t22}
Let $x:M^n\rightarrow M^{n+1}_1(c)$ be a spacelike hypersurface, then the 2-form $g=-(\langle\Delta y, \Delta
y\rangle_2-n^2\kappa)\langle\text dy, \text dy\rangle_2 $ is a globally
defined conformal invariant.
Moreover, $g$ is positive definite at any non-umbilical point of
$M^n$.
\end{theorem}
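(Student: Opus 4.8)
The plan is to follow Wang's strategy and split the assertion into two invariance statements plus one pointwise computation. The first observation is that the local lift $y=Z\circ\sigma_c\circ x$ is determined only up to a positive rescaling $y\mapsto\widetilde y=e^{\tau}y$ with $\tau\in C^{\infty}(U)$, since any two lifts of $\pi$ differ by such a factor; on the other hand, a conformal transformation of $M^{n+1}_1(c)$ acts on the lift by a fixed $T\in O(n+3,2)$, i.e. $\widetilde y=yT$. Because $T$ preserves $\langle\ ,\ \rangle_2$, it preserves $\langle dy,dy\rangle_2$, the associated Laplacian $\Delta$, the scalar curvature $\kappa$ and $\langle\Delta y,\Delta y\rangle_2$ simultaneously, so $g$ is automatically invariant under the $O(n+3,2)$-action. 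Thus the whole theorem reduces to (a) invariance of $g$ under the rescaling $y\mapsto e^{\tau}y$, which simultaneously yields that $g$ is globally well defined, and (b) the pointwise sign of the scalar factor.

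For (a) I would first record the basic identities coming from $\langle y,y\rangle_2=0$: differentiating gives $\langle y,dy\rangle_2=0$, and since $\langle dy,dy\rangle_2=I$ exhibits $y$ as an isometric immersion of $(U,I)$ into $\mathbb R^{n+3}_2$, the field $\Delta y$ is normal to $dy(TM)$. Hence, for an $I$-orthonormal frame $\{E_k\}$, $\langle\Delta y,E_ky\rangle_2=0$, while $\langle\Delta y,y\rangle_2=-n$ and $\langle y,E_ky\rangle_2=0$. Under $\widetilde y=e^{\tau}y$ one has $\widetilde I=e^{2\tau}I$, and combining the product rule for the Laplacian with the conformal change law of $\Delta$ in dimension $n$ gives
\[
\widetilde\Delta\widetilde y=e^{-\tau}\Bigl[\Delta y+\bigl(\Delta\tau+(n-1)|\nabla\tau|^{2}\bigr)y+n\langle\nabla\tau,\nabla y\rangle\Bigr].
\]
Taking $\langle\ ,\ \rangle_2$ and using the three orthogonality relations above to kill every cross term, I expect
\[
\langle\widetilde\Delta\widetilde y,\widetilde\Delta\widetilde y\rangle_2=e^{-2\tau}\Bigl[\langle\Delta y,\Delta y\rangle_2-2n\Delta\tau-n(n-2)|\nabla\tau|^{2}\Bigr].
\]
Comparing this with the conformal change of the normalized scalar curvature, $n^{2}\widetilde\kappa=e^{-2\tau}\bigl[n^{2}\kappa-2n\Delta\tau-n(n-2)|\nabla\tau|^{2}\bigr]$, the $\tau$-dependent terms cancel exactly, so $\langle\widetilde\Delta\widetilde y,\widetilde\Delta\widetilde y\rangle_2-n^{2}\widetilde\kappa=e^{-2\tau}(\langle\Delta y,\Delta y\rangle_2-n^{2}\kappa)$. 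Since $\widetilde I=e^{2\tau}I$, the two factors $e^{\pm2\tau}$ cancel and $g$ is unchanged, which proves both the lift-independence and, patching over coordinate charts, the global definition of $g$.

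For (b) I would evaluate $g$ in a concrete lift, say $c=1$ and $y=(1,x)$ with $x\in\mathbb S^{n+1}_1(1)$, for which $\langle y,y\rangle_2=0$ and $I=\langle dx,dx\rangle_s$. Writing $\xi$ for the timelike unit normal ($\langle\xi,\xi\rangle=-1$) and $H,S$ for the mean curvature and the squared length of the second fundamental form, the formula $\Delta x=nH\xi-nx$ gives $\langle\Delta y,\Delta y\rangle_2=n^{2}(c-H^{2})$, while the Gauss equation for a spacelike hypersurface yields $n^{2}\kappa=n^{2}c-\tfrac{n}{n-1}(n^{2}H^{2}-S)$. Hence the scalar factor is $\langle\Delta y,\Delta y\rangle_2-n^{2}\kappa=\tfrac{n}{n-1}(nH^{2}-S)$, and by the Cauchy--Schwarz inequality $nH^{2}\le S$, with equality exactly at umbilic points. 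Therefore $-(\langle\Delta y,\Delta y\rangle_2-n^{2}\kappa)=\tfrac{n}{n-1}(S-nH^{2})\ge0$, vanishing precisely at umbilic points, so $g$ is positive definite at every non-umbilic point; the cases $c=0,-1$ are handled identically with $\sigma_0$ and $\sigma_{-1}$.

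The main obstacle is the precise cancellation in step (a): it works only because $\Delta y$ is normal to the immersion, which makes the cross term $n\langle\Delta y,\langle\nabla\tau,\nabla y\rangle\rangle_2$ vanish, and because the same combination $-2n\Delta\tau-n(n-2)|\nabla\tau|^{2}$ is produced both by the change of $\langle\Delta y,\Delta y\rangle_2$ and by the change of $n^{2}\kappa$, which in turn forces the normalization $\kappa=R/\!\left(n(n-1)\right)$. Securing the conformal transformation laws for the vector-valued Laplacian and for the scalar curvature with matching sign conventions is the only delicate point; the remaining manipulations are routine.
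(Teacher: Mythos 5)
Your proposal is correct and takes essentially the same route as the paper, which gives no independent argument but defers to Wang's proof of Theorem 1.2 in \cite{w}: invariance of the factor $\langle\Delta y,\Delta y\rangle_2-n^2\kappa$ under the rescaling $y\mapsto e^{\tau}y$ via the exact cancellation of the $-2n\Delta\tau-n(n-2)|\nabla\tau|^2$ terms (using that $\Delta y$ is normal and $\langle\Delta y,y\rangle_2=-n$), trivial invariance under the $O(n+3,2)$-action, and the pointwise evaluation $\langle\Delta y,\Delta y\rangle_2-n^2\kappa=\frac{n}{n-1}(nH^2-|II|^2)$. Your sign computations check out and agree with the paper's own displayed identities (\ref{hh}) and (\ref{coff0}), where this factor appears as $-e^{2\tau}$, with Cauchy--Schwarz giving positive definiteness exactly off the umbilic set.
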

We call $g$
the conformal metric of the spacelike  hypersurface $M^n$. There exists a unique lift
$$Y:M\rightarrow C^{n+2}$$ such that $g=\langle\text dY, \text dY\rangle_2$.  We call $Y$ the conformal position
vector of the spacelike  hypersurface $M^n$.
Theorem \ref{t22} implies that
\begin{theorem}
Two spacelike hypersurfaces $x,\bar{x}:M^n\rightarrow M^{n+1}_1(c)$
are conformally equivalent if and only if there exists $T\in O(n+3,2)$
such that $\bar{Y}=YT$, where $Y,\tilde Y$ are the conformal
position vector of $x,\tilde{x}$, respectively.
\end{theorem}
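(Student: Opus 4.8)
The plan is to deduce this from the projective congruence theorem already established on $\mathbb Q^{n+1}_1$ and then to upgrade the projective identity to an identity of cone lifts by using the uniqueness of the conformal position vector. Recall that $O(n+3,2)$ acts on $\mathbb Q^{n+1}_1$ by $T([X])=[XT]$, and that $Y,\bar Y:M^n\to C^{n+2}$ are lifts of $\sigma_c\circ x,\sigma_c\circ\bar x$ respectively, so $\pi(Y)=\sigma_c\circ x$ and $\pi(\bar Y)=\sigma_c\circ\bar x$. Sufficiency is immediate: if $\bar Y=YT$ for some $T\in O(n+3,2)$, applying $\pi$ gives $\sigma_c\circ\bar x=\pi(YT)=T(\pi(Y))=T(\sigma_c\circ x)$, so by the congruence theorem on $\mathbb Q^{n+1}_1$ the hypersurfaces $x$ and $\bar x$ are conformally equivalent.

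For necessity, suppose $x$ and $\bar x$ are conformally equivalent. The congruence theorem on $\mathbb Q^{n+1}_1$ furnishes $T\in O(n+3,2)$ with $\sigma_c\circ x=T(\sigma_c\circ\bar x)$, that is $\pi(Y)=\pi(\bar Y T)$. Hence $Y$ and $\bar Y T$ are two lifts of the same map into the cone $C^{n+2}$, so they agree up to a nowhere-vanishing smooth factor: $Y=\mu\,\bar Y T$ for some $\mu\in C^\infty(M^n)$ of constant sign on each connected component. Everything then reduces to showing $\mu\equiv 1$.

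To pin down $\mu$ I would compute $\langle dY,dY\rangle_2$ directly. Writing $dY=d\mu\,(\bar Y T)+\mu\,(d\bar Y)T$ and using that $T$ preserves $\langle\cdot,\cdot\rangle_2$ together with the two structural identities of any cone lift, $\langle\bar Y,\bar Y\rangle_2=0$ and $\langle\bar Y,d\bar Y\rangle_2=\tfrac12 d\langle\bar Y,\bar Y\rangle_2=0$, every cross term vanishes and one obtains $\langle dY,dY\rangle_2=\mu^2\langle d\bar Y,d\bar Y\rangle_2$. By the defining property of the conformal position vector the left-hand side is the conformal metric $g$ of $x$ and $\langle d\bar Y,d\bar Y\rangle_2$ is the conformal metric $\bar g$ of $\bar x$. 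Since $g$ is a globally defined conformal invariant by Theorem \ref{t22} and $x,\bar x$ are conformally equivalent, $g=\bar g$ as tensors on $M^n$, whence $\mu^2\equiv 1$; the normalization singling out the canonical lift $Y$ then forces $\mu\equiv 1$, giving $Y=\bar Y T$, equivalently $\bar Y=YT^{-1}$. Replacing $T$ by $T^{-1}\in O(n+3,2)$ yields the stated form.

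The hard part is precisely this last step. The projective theorem only controls the image in $\mathbb Q^{n+1}_1$ and therefore determines $Y$ merely up to the scaling $\mu$; removing this scaling is not formal and rests on two inputs that must be combined carefully: the conformal invariance of $g$ from Theorem \ref{t22} (to secure $g=\bar g$) and the uniqueness of the lift characterized by $g=\langle dY,dY\rangle_2$ (to convert $\mu^2\equiv1$ into $\mu\equiv1$, ruling out a sign flip). Once these are in hand, the cone identities $\langle Y,Y\rangle_2=0$ and $\langle Y,dY\rangle_2=0$ render the remaining computation routine.
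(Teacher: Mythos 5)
Your proposal is correct and follows essentially the same route as the paper, which derives this theorem directly from Theorem \ref{t22} together with the uniqueness of the lift $Y$ satisfying $g=\langle \mathrm{d}Y,\mathrm{d}Y\rangle_2$ and the congruence theorem on $\mathbb{Q}^{n+1}_1$ --- precisely the two inputs you combine, with the computation $\langle \mathrm{d}Y,\mathrm{d}Y\rangle_2=\mu^2\langle \mathrm{d}\bar{Y},\mathrm{d}\bar{Y}\rangle_2$ making the implication explicit. One small refinement: since the light cone in $\mathbb{R}^{n+3}_2$ (signature $(n+1,2)$) is connected, $-Y$ is an equally admissible lift and $\mu^2\equiv1$ only yields $\mu\equiv\pm1$ on a connected $M^n$; rather than appealing to a normalization, the case $\mu\equiv-1$ is absorbed by replacing $T$ with $-T\in O(n+3,2)$, which gives the stated conclusion at once.
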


Let $\{E_1, \cdots , E_n\}$ be a local orthonormal basis of $M^n$
with respect to $g$ with dual basis $\{\omega_1, \cdots ,
\omega_n\}$. Denote $Y_i=E_i(Y)$ and define
\begin{equation*}
N=-\frac{1}{n}\Delta Y-\frac{1}{2n^2}\langle\Delta Y, \Delta
Y\rangle_2 Y,
\end{equation*}
where $\Delta$ is the Laplace operator of $g$, then we have
\begin{equation*}
\langle N, Y\rangle_2=1,~ \langle N, N\rangle_2=0,~\langle N,
Y_k\rangle_2=0,~ \langle Y_i, Y_j\rangle_2=\delta_{ij},\quad1\leq i,j,k\leq n.
\end{equation*}
We may decompose $\mathbb{R}^{n+3}_2$ such that
$$\mathbb{R}^{n+3}_2=\text{span}\{Y, N\}\oplus \text{span}
\{Y_1, \cdots , Y_n\}\oplus\mathbb V ,$$ where $\mathbb
V\bot\text{span}\{Y, N, Y_1, \cdots , Y_n\}$.  We call $\mathbb V$
the conformal normal bundle of $x$,
which is linear bundle. Let $\xi$ be a local section of $\mathbb{V}$
and $<\xi,\xi>_2=-1$, then $\{Y, N, Y_1, \cdots , Y_n, \xi\}$ forms a
moving frame in $\mathbb{R}^{n+3}_2$ along $M^n$. We write the structure
equations as follows,
\begin{equation}\label{struct}
\begin{split}
&\mathrm{d}Y=\sum_i\omega_iY_i,\\
&\mathrm{d}N=\sum_{ij}A_{ij}\omega_jY_i+\sum_iC_i\omega_i\xi,\\
&\mathrm{d}Y_i=-\sum_{j}A_{ij}\omega_jY-\omega_iN+\sum_j\omega_{ij}Y_j+\sum_{j}B_{ij}\omega_j\xi,\\
&\mathrm{d}\xi=\sum_iC_i\omega_iY+\sum_{ij}B_{ij}\omega_jY_i,
\end{split}
\end{equation}
 where $\omega_{ij} (=-\omega_{ij})$ are the connection 1-forms on $M^n$ with respect to $\{\omega_1, \cdots ,
\omega_n\}$.  It is clear that
 $A=\sum_{ij}A_{ij}\omega_j\otimes\omega_i,~
B=\sum_{ij}B_{ij}\omega_j\otimes\omega_i,~ C=\sum_iC_i\omega_i$ are globally defined conformal
invariants. We call $A,~B$ and $C$ the Blaschke tensor, the conformal second fundamental form
 and the conformal $1$-form, respectively. The covariant derivatives
of these tensors with respect to  $\omega_{ij}$ are defined by:
$$\sum_jC_{i, j}\omega_j=dC_i+\sum_kC_k\omega_{kj},$$
$$\sum_kA_{ij, k}\omega_k=dA_{ij}+\sum_kA_{ik}\omega_{kj}
+\sum_kA_{kj}\omega_{ki},$$
$$\sum_kB_{ij, k}\omega_k=dB_{ij}+\sum_kB_{ik}\omega_{kj}
+\sum_kB_{kj}\omega_{ki}.$$
By
exterior differentiation of structure equations (\ref{struct}),
we can get the integrable conditions of the structure equations
$$A_{ij}=A_{ji},~~~B_{ij}=B_{ji},$$
\begin{equation}\label{stru1}
A_{ij, k}-A_{ik, j}=B_{ij}C_k-B_{ik}C_j,
\end{equation}
\begin{equation}\label{stru2}
B_{ij,k}-B_{ik,
j}=\delta_{ij}C_k- \delta_{ik}C_j,
\end{equation}
\begin{equation}\label{stru3}
C_{i, j}-C_{j, i}=\sum_k(B_{ik}A_{kj}-B_{jk}A_{ki}),
\end{equation}
\begin{equation}\label{stru4}
R_{ijkl}=B_{il}B_{jk}-B_{ik}B_{jl}+A_{ik}\delta_{jl}+A_{jl}\delta_{ik}-A_{il}\delta_{jk}-A_{jk}\delta_{il}.
\end{equation}
Furthermore, we have
\begin{equation}\label{cond1}
\begin{split}
&\text{tr}(A)=\frac{1}{2n}( n^2\kappa-1),\quad
R_{ij}=\text{tr}(A)\delta_{ij}+(n-2)A_{ij}+\sum_kB_{ik}
B_{kj},\\
&(1-n)C_i=\sum_jB_{ij,j},\quad \sum_{ij}B_{ij}^2=\frac{n-1}{n}, \quad
\sum_i B_{ii}=0,
\end{split}
\end{equation}
where $\kappa$ is the normalized scalar curvature of $g$. From
(\ref{cond1}), we see that when $n\geq3$, all coefficients in the
structure equations are determined by the conformal
metric $g$ and the conformal second fundamental form $B$, thus we get the following conformal congruent
theorem.
\begin{theorem}
Two spacelike hypersurfaces
$x, \bar{x}: M^n\rightarrow M^{n+1}_1(c) (n\geq3)$ are conformally
equivalent if and only if there exists a diffeomorphism $\varphi:
M^n\rightarrow M^n$ which preserves the conformal metric  and the
conformal second fundamental form.
\end{theorem}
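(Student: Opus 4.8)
The plan is to prove both implications, the nontrivial content lying in the sufficiency direction, where the hypothesis $n\ge 3$ enters decisively.

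For necessity, suppose $x$ and $\bar x$ are conformally equivalent. By the congruence theorem stated in terms of conformal position vectors, there is a diffeomorphism $\varphi:M^n\to M^n$ and a fixed $T\in O(n+3,2)$ with $\bar Y\circ\varphi=YT$. Since $T$ preserves $\langle\,,\rangle_2$, it carries the conformal frame $\{Y,N,Y_1,\dots,Y_n,\xi\}$ of $x$ to the corresponding frame of $\bar x\circ\varphi$; comparing the structure equations (\ref{struct}) then shows at once that $\varphi^*\bar g=g$ and $\varphi^*\bar B=B$, so $\varphi$ preserves the conformal metric and the conformal second fundamental form.

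The substance is the sufficiency direction. Assume $\varphi^*\bar g=g$ and $\varphi^*\bar B=B$. First I would fix a local orthonormal coframe $\{\omega_i\}$ for $g$ and transport it through $\varphi$ so that $\varphi^*\bar\omega_i=\omega_i$; because the Levi-Civita connection of $g$ is intrinsic, the connection forms match as well, $\varphi^*\bar\omega_{ij}=\omega_{ij}$. The key step is then to read off from (\ref{cond1}) that, when $n\ge 3$, the remaining coefficients are completely determined by $g$ and $B$: the normalized scalar curvature $\kappa$ of $g$ fixes $\mathrm{tr}(A)$; the Ricci tensor $R_{ij}$ of $g$, together with $B$ and $\mathrm{tr}(A)$, fixes $A_{ij}$ through
$$R_{ij}=\mathrm{tr}(A)\delta_{ij}+(n-2)A_{ij}+\sum_kB_{ik}B_{kj},$$
which is solvable precisely because $n-2\neq 0$; and finally $(1-n)C_i=\sum_jB_{ij,j}$ fixes $C_i$. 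Consequently $\varphi^*\bar A=A$ and $\varphi^*\bar C=C$.

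The last step is to integrate the structure equations. Assembling the frame $F=(Y,N,Y_1,\dots,Y_n,\xi)^T$ as a matrix-valued map on $M^n$, the equations (\ref{struct}) read $\mathrm dF=MF$, where the matrix of $1$-forms $M$ is built from $\omega_i,\omega_{ij},A_{ij},B_{ij},C_i$ and takes values in $\mathfrak o(n+3,2)$, the integrability being exactly (\ref{stru1})--(\ref{stru4}). By the previous paragraph $\varphi^*\bar M=M$, so $F$ and $\tilde F:=\bar F\circ\varphi$ satisfy the same linear system $\mathrm dF=MF$. Then $G:=F^{-1}\tilde F$ obeys $\mathrm dG=0$, hence is constant; compatibility with the fixed Gram matrix of the frame (recorded by $\langle Y,N\rangle_2=1$, $\langle Y_i,Y_j\rangle_2=\delta_{ij}$, $\langle\xi,\xi\rangle_2=-1$) forces $G\in O(n+3,2)$, and reading off the $Y$-component gives $\bar Y\circ\varphi=YG$. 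By the congruence theorem in conformal position vectors this means $x$ and $\bar x$ are conformally equivalent.

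I expect the main obstacle to be the recovery of the Blaschke tensor $A$ from $g$ and $B$: this is exactly where $n\ge 3$ is indispensable, through the factor $(n-2)$ in (\ref{cond1}), and it is the reason the theorem is restricted to $n\ge 3$ (for $n=2$ the tensor $A$ is not determined by $g$ and $B$, so that case would require a separate argument). The frame-integration step is then routine once one verifies that $M$ lands in $\mathfrak o(n+3,2)$ and that $F$ is invertible, both of which follow from the normalization relations above.
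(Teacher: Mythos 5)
Your proof is correct and takes essentially the same route as the paper: the paper obtains this theorem exactly by observing from (\ref{cond1}) that for $n\geq 3$ the Blaschke tensor $A$ (via the Ricci identity with the invertible factor $n-2$) and the conformal $1$-form $C$ (via $(1-n)C_i=\sum_j B_{ij,j}$) are determined by $g$ and $B$, so that all coefficients of the structure equations (\ref{struct}) match, and then appealing to the standard moving-frame integration argument from Wang's congruence theorem in \cite{w}, which you have merely spelled out in detail. The only point worth flagging is that $\xi$ is fixed only up to sign (the conformal normal bundle is a line bundle), so ``preserving $B$'' is to be understood with compatible choices of the unit section $\xi$ --- a convention the paper likewise leaves implicit.
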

Next we give the relations between the conformal invariants and the
isometric invariants of a spacelike hypersurface in $M^{n+1}_1(c)$.

First we consider the spacelike hypersurface  $x:M^n\rightarrow \mathbb{R}^{n+1}_1$ in  $\mathbb{R}^{n+1}_1$. Let $\{e_1, \cdots , e_n\}$ be an orthonormal
local basis with respect to the induced metric $I=<dx,dx>_1$ with dual basis
$\{\theta_1, \cdots , \theta_n\}$. Let $e_{n+1}$ be a normal vector
field  of $x$ , $<e_{n+1},e_{n+1}>_1=-1$. Let $II=\sum_{ij}h_{ij}\theta_i\otimes\theta_j$ denote the second fundamental form,  the mean curvature
$ H=\frac{1}{n}\sum_{i}
h_{ii}$. Denote by $\Delta_{M}$ the Laplacian operator and $\kappa_{M}$ the
normalized scalar curvature for $I$.  By structure equation of $x:M^n\rightarrow \mathbb{R}^{n+1}_1$ we get that
\begin{equation}\label{hh}
\Delta_{M}x=nHe_{n+1}.
\end{equation}
There is a local lift of $x$
$$y:M^n\rightarrow C^{n+2},\quad y=( \frac{<x, x>_1+1}{2},x, \frac{<x, x>_1-1}{2})
.$$
It follows from (\ref{hh}) that
$$\langle\Delta y, \Delta y\rangle_2-n^2\kappa_M=\frac{n}{n-1}(-|II|^2+n| H|^2)=-e^{2\tau}.$$
Therefore the conformal metric $g$, conformal position vector of $x$ and $\xi$
have the following expression,
\begin{equation}\label{coff0}
\begin{split}
&g=\frac{n}{n-1}(|II|^2-n|H|^2)<\text dx,\text dx>_1:=e^{2\tau}I,~~~
Y=e^\tau y,\\
&\xi=-H y+(<x,e_{n+1}>_1, e_{n+1},<x,e_{n+1}>_1).
\end{split}
\end{equation}
By a direct calculation we get the following expression of the
conformal invariants,
\begin{equation}\label{coff}
\begin{split}
&A_{ij}=e^{-2\tau}[\tau_i\tau_j- h_{ij}H
-\tau_{i,j}+\frac{1}{2}(-|\nabla\tau|^2+| H|^2) \delta_{ij}],\\
&B_{ij}=e^{-\tau}(h_{ij}-H \delta_{ij}),~~~
C_i=e^{-2\tau}(H\tau_i-H_i
-\sum_{j}h_{ij}\tau_j),
\end{split}
\end{equation}
where $\tau_i=e_i(\tau)$ and $|\nabla \tau|^2=\sum_i\tau_i^2$, and $\tau_{i,j}$ is the Hessian of $\tau$ for $I$ and $H_
i=e_i(H)$.

For a spacelike hypersurface $x:M^n\rightarrow
\mathbb{S}^{n+1}_1(1)$,
the conformal metric $g$, conformal position vector of $x$ and $\xi$
have the following expression,
\begin{equation}\label{coff01}
\begin{split}
&g=\frac{n}{n-1}(|II|^2-n|H|^2)<\text dx,\text dx>_1:=e^{2\tau}I,\\
&Y=e^\tau(1,x)=e^\tau y,~~~~
\xi=-H y+(0,e_{n+1}).
\end{split}
\end{equation}

For a spacelike hypersurface $x:M^n\rightarrow \mathbb{H}^{n+1}_1(-1)$,
the conformal metric $g$, conformal position vector of $x$ and $\xi$
have the following expression,
\begin{equation}\label{coff02}
\begin{split}
&g=\frac{n}{n-1}(|II|^2-n|H|^2)<\text dx,\text dx>_2:=e^{2\tau}I,\\
&Y=e^\tau(x,1)=e^\tau y,~~~
\xi=-H y+(e_{n+1},0).
\end{split}
\end{equation}
Using the similar calculation from (\ref{coff01}) and (\ref{coff02}), we have the
following united expression of the conformal invariants,
\begin{equation}\label{coff1}
\begin{split}
&A_{ij}=e^{-2\tau}[\tau_i\tau_j-\tau_{i,j}- h_{ij}H
+\frac{1}{2}(-|\nabla\tau|^2+| H|^2+c) \delta_{ij}],\\
&B_{ij}=e^{-\tau}(h_{ij}-H \delta_{ij}),~~~
C_i=e^{-2\tau}(H\tau_i-H_i
-\sum_{j}h_{ij}\tau_j),
\end{split}
\end{equation}
where $c=1$ for $x:M^n\rightarrow S^{n+1}_1(1)$, and
$c=-1$ for $x:M^n\rightarrow H^{n+1}_1(-1)$.

\par\noindent
\section{Typical examples}
In this section, we present some examples of the
spacelike  hypersurfaces in $M^{n+1}_1(c)$ with constant eigenvalues of para-Blaschke tensor.
\begin{ex}\label{ex1}
For constant $a>0$, let $x_1:\mathbb{H}^k(-1)\to \mathbb{R}^{k+1}_1$ be the standard embedding
and $y:\mathbb{R}^{n-k}\to \mathbb{R}^{n-k}$ identity. We define the spacelike hypersurface  $$x=(x_1,y):\mathbb{H}^k(-a)\times\mathbb{R}^{n-k}
\to\mathbb{R}^{n+1}_1,~~1\leq k\leq n-1.$$
\end{ex}
Let $\xi=(\frac{1}{a}x_1,\overrightarrow{0})$ be the normal vector field of $x$. Thus
$$I=<dx,dx>_1=I_{\mathbb{H}^k(-a)}+I_{\mathbb{R}^{n-k}},~~~II=-<dx,d\xi>_1=\frac{-1}{a}I_{\mathbb{H}^k(-a)},$$
where $I_{\mathbb{H}^k(-a)}$ denotes the standard metric on $\mathbb{H}^k(-a)$ and $I_{\mathbb{R}^{n-k}}$ the standard metric on $\mathbb{R}^{n-k}$.

 Let $\{e_1,\cdots,e_k\}$ be
a local fields of orthonormal basis on $\mathbb{H}^k(-a)$ and $\{e_{k+1},\cdots,e_n\}$ a local fields of orthonormal basis on $\mathbb{R}^{n-k}$,
then $\{e_1,\cdots,e_n\}$ is a local fields of orthonormal basis on $\mathbb{H}^k(-a)\times\mathbb{R}^{n-k}$. Thus, under the local fields of orthonormal basis $\{e_1,\cdots,e_n\}$,
$$\big(h_{ij}\big)=diag(\frac{-1}{a},\cdots,\frac{-1}{a},0,\cdots,0).$$
Under the local fields of orthonormal basis, from (\ref{coff}),  we have
$$(B_{ij})=diag(\underbrace{b_1,\cdots,b_1}_k,\underbrace{b_2,\cdots,b_2}_{n-k}),~~(A_{ij})=diag(\underbrace{a_1,\cdots,a_1}_k,\underbrace{a_2,\cdots,a_2}_{n-k}),$$
where $$b_1=\frac{1}{n}\sqrt{\frac{(n-1)(n-k)}{k}}, ~b_2=\frac{-1}{n}\sqrt{\frac{(n-1)k}{n-k}}, ~a_1=\frac{(n-1)(k-2n)}{2n^2(n-k)}, ~a_2=\frac{(n-1)k}{2n^2(n-k)}.$$
Thus $(D^{\lambda}_{ij})=diag(\underbrace{d_1,\cdots,d_1}_k,\underbrace{d_2,\cdots,d_2}_{n-k})$ and $d_1=a_1+\lambda b_1, ~d_2=a_2+\lambda b_2$.

\begin{ex}\label{ex2}
Let $x_1:\mathbb{S}^k(1)\to \mathbb{R}^{k+1}$ and $x_2:\mathbb{H}^{n-k}(-1)\to \mathbb{R}^{n-k+1}_1$ be two standard embedings.
For constant $a>0$, we define the spacelike hypersurface
$$x=(\sqrt{1+a^2}x_1,ax_2):\mathbb{S}^k(\sqrt{1+a^2})\times\mathbb{H}^{n-k}(-a)\to\mathbb{S}^{n+1}_1(1)\subset \mathbb{R}^{n+2}_1,~~1\leq k\leq n-1.$$
\end{ex}
Let $\xi=(ax_1,\sqrt{1+a^2}x_2)$ be  the normal vector field of $x$. Thus
$$I=<dx,dx>_1=(1+a^2)I_{S^k(1)}+a^2I_{H^{n-k}(-1)},$$
$$II=-<dx,d\xi>_1=-a\sqrt{1+a^2}(I_{S^k(1)}+I_{H^{n-k}(-1)}).$$
Let $\{e_1,\cdots,e_k\}$ be
a local fields of orthonormal basis on $\mathbb{S}^k(\sqrt{1+a^2})$ and $\{e_{k+1},\cdots,e_n\}$ a local fields of orthonormal basis on $\mathbb{H}^{n-k}(-a)$,
then $\{e_1,\cdots,e_n\}$ is a local fields of orthonormal basis on $\mathbb{S}^k(\sqrt{1+a^2})\times\mathbb{H}^{n-k}(-a)$. Thus, under the local fields of orthonormal tangent frame $\{e_1,\cdots,e_n\}$,
$$\big(h_{ij}\big)=diag(\frac{-a}{\sqrt{1+a^2}},\cdots,\frac{-a}{\sqrt{1+a^2}},\frac{-\sqrt{1+a^2}}{a},\cdots,\frac{-\sqrt{1+a^2}}{a}).$$
Under the local fields of orthonormal basis, from (\ref{coff02}),  we have
$$(B_{ij})=diag(\underbrace{b_1,\cdots,b_1}_k,\underbrace{b_2,\cdots,b_2}_{n-k}),~~(A_{ij})=diag(\underbrace{a_1,\cdots,a_1}_k,\underbrace{a_2,\cdots,a_2}_{n-k}),$$
where
\begin{equation*}
\begin{split}
&b_1=\frac{1}{n}\sqrt{\frac{(n-1)(n-k)}{k}}, ~~b_2=\frac{-1}{n}\sqrt{\frac{(n-1)k}{n-k}},\\
&a_1=\frac{n-1}{k(n-k)}\frac{(n-k)^2+n^2a^2}{2n^2}, ~~~a_2=\frac{n-1}{k(n-k)}\frac{k^2-n^2a^2-n^2}{2n^2}.
\end{split}
\end{equation*}
Thus $(D^{\lambda}_{ij})=diag(\underbrace{d_1,\cdots,d_1}_k,\underbrace{d_2,\cdots,d_2}_{n-k})$ and $d_1=a_1+\lambda b_1, ~d_2=a_2+\lambda b_2$.
\begin{ex}\label{ex3}
Let $x_1:\mathbb{H}^k(-1)\to \mathbb{R}^{k+1}_1$ and $x_2:\mathbb{H}^{n-k}(-1)\to \mathbb{R}^{n-k+1}_1$ be two standard embedings. For constant $a$ satisfying $0<a<1$, We we define the spacelike hypersurface
$$x=(\sqrt{1-a^2}x_1,ax_2):\mathbb{H}^k(-a)\times\mathbb{H}^{n-k}(-\sqrt{1-a^2})\to\mathbb{H}^{n+1}_1(-1)\subset \mathbb{R}^{n+2}_2,~~1\leq k\leq n-1.$$
\end{ex}
Let $\xi=(-ax_1,\sqrt{1-a^2}x_2)$ be the normal vector field of $x$. Thus
$$I=<dx,dx>_1=(1-a^2)I_{H^k(-1)}+a^2I_{H^{n-k}(-1)},$$
$$II=-<dx,d\xi>_1=a\sqrt{1-a^2}(I_{H^k(-1)}-I_{H^{n-k}(-1)}).$$
Let $\{e_1,\cdots,e_k\}$ be
a local fields of orthonormal basis on $\mathbb{H}^k(-a)$ and $\{e_{k+1},\cdots,e_n\}$ a local fields of orthonormal basis on $\mathbb{H}^{n-k}(-\sqrt{1-a^2})$,
then $\{e_1,\cdots,e_n\}$ is a local fields of orthonormal basis on $\mathbb{H}^k(-a)\times\mathbb{H}^{n-k}(-\sqrt{1-a^2})$. Thus, under the local fields of orthonormal basis $\{e_1,\cdots,e_n\}$,
$$\big(h_{ij}\big)=diag(\frac{a}{\sqrt{1-a^2}},\cdots,\frac{a}{\sqrt{1-a^2}},\frac{-\sqrt{1-a^2}}{a},\cdots,\frac{-\sqrt{1-a^2}}{a}).$$
Under the local fields of orthonormal basis, from (\ref{coff02}),  we have
$$(B_{ij})=diag(\underbrace{b_1,\cdots,b_1}_k,\underbrace{b_2,\cdots,b_2}_{n-k}),~~(A_{ij})=diag(\underbrace{a_1,\cdots,a_1}_k,\underbrace{a_2,\cdots,a_2}_{n-k}),$$
where
\begin{equation*}
\begin{split}
&b_1=\frac{1}{n}\sqrt{\frac{(n-1)(n-k)}{k}}, ~~b_2=\frac{-1}{n}\sqrt{\frac{(n-1)k}{n-k}},\\
&a_1=\frac{n-1}{k(n-k)}\frac{(n-k)^2-n^2a^2}{2n^2}, ~~~a_2=\frac{n-1}{k(n-k)}\frac{n^2a^2-n^2+k^2}{2n^2}.
\end{split}
\end{equation*}
Thus $(D^{\lambda}_{ij})=diag(\underbrace{d_1,\cdots,d_1}_k,\underbrace{d_2,\cdots,d_2}_{n-k})$ and $d_1=a_1+\lambda b_1, ~d_2=a_2+\lambda b_2$.
\begin{ex}\label{ex4}
Let $p,q$ be any two given natural numbers with $p+q<n$ and
a real number $a>1$. We define the spacelike hypersurface
$$
 x:\mathbb H^{q}(-\sqrt{a^2-1})\times\mathbb S^{p}(a)\times
 \mathbb R^+\times\mathbb R^{n-p-q-1}\rightarrow
 \mathbb R^{n+1}_1 ,$$ defined by $$
  x(u',u'',t,u''')=(tu',tu'',u'''),$$ where $u'\in \mathbb H^{q}(-\sqrt{a^2-1}), u''\in\mathbb S^{p}(a),u'''\in\mathbb R^{n-p-q-1}.$
 \end{ex}
Let $b=\sqrt{a^2-1}$. One of the normal vector of $x$ can be taken as
$$e_{n+1}=(\frac{a}{b}u',\frac{b}{a}u'',0).$$
The first and second fundamental form of $x$ are given by
$$I=t^2(<du',du'>_1+du''\cdot du'')+dt\cdot dt+du'''\cdot du''',$$
$$II=-<dx,de_{n+1}>_1=-t(\frac{a}{b}<du',du'>_1+\frac{b}{a}du''\cdot du'').$$
Thus the mean curvature of $x$ satisfies  $$H=\frac{-pb^2-qa^2}{nabt},$$
and $e^{2\tau}=\frac{n}{n-1}[\sum_{ij}h^2_{ij}-nH^2]=\frac{p(n-p)b^4-2pqa^2b^2+q(n-q)a^4}{(n-1)t^2}:=\frac{\alpha^2}{t^2}.$

From (\ref{coff0}) and (\ref{coff1}), we see that the conformal 1-form $C=0$, and the conformal metric and the conformal second fundamental form of $x$ are given by
\begin{equation}\label{metric}
\begin{split}
&g=\alpha^2<du',du'>+\alpha^2du''\cdot du''+\frac{\alpha^2}{t^2}(dt\cdot dt+du'''\cdot du''')=\tilde{g}_1+ \tilde{g}_2+ \tilde{g}_3,\\
&B=\sum_{ij}B_{ij}\omega_i\otimes\omega_j,~~~(B_{ij})=(\underbrace{b_1,\cdots,b_1}_{q},\underbrace{b_2,\cdots,b_2}_{p},\underbrace{b_3,\cdots,b_3}_{n-p-q}),\\
&A=\sum_{ij}A_{ij}\omega_i\otimes\omega_j,~~~(B_{ij})=(\underbrace{a_1,\cdots,a_1}_{q},\underbrace{a_2,\cdots,a_2}_{p},\underbrace{a_3,\cdots,a_3}_{n-p-q}),
\end{split}
\end{equation}
where $b_1=\frac{pb^2-(n-q)a^2}{nab\alpha},~b_2=\frac{qa^2-(n-p)b^2}{nab\alpha},~b_3=\frac{pb^2+qa^2}{nab\alpha},$ and
$$a_1=\frac{(pb^2+qa^2)^2-(pb^2+qa^2)2na^2+n^2a^2b^2}{2n^2a^2b^2\alpha^2},$$
$$a_2=\frac{(pb^2+qa^2)^2-(pb^2+qa^2)2nb^2+n^2a^2b^2}{2n^2a^2b^2\alpha^2},a_3=\frac{(pb^2+qa^2)^2+n^2a^2b^2}{2n^2a^2b^2\alpha^2}.$$
Thus $(D^{\lambda}_{ij})=diag(\underbrace{d_1,\cdots,d_1}_q,\underbrace{d_2,\cdots,d_2}_p,\underbrace{d_3,\cdots,d_3}_{n-p-q})$, $d_i=a_i+\lambda b_i,~~i=1,2,3.$

\begin{ex}\label{ex5}
Given constants $\lambda, r (r> 0)$, we define the spacelike hypersurface
$$x=(\frac{y_1}{y_0},\frac{y_2}{y_0}): M^k\times \mathbb{H}^{n-k}(-r) \to \mathbb{S}^{n+1}_1(1),~~2\leq k\leq n-1.$$
Here $y=(y_0,y_2):\mathbb{H}^{n-k}(-r)\to \mathbb{R}^{n-k+1}_1$ is a standard embedding, and
$y_1:M^k\to \mathbb{S}^{k+1}_1(r)\subset \mathbb{R}^{k+2}_1$
is a umbilic-free spacelike hypersurface with constant scalar curvature $R_1$ and
the mean curvature $H_1$ satisfying
$R_1=\frac{nk(k-1)+(n-1)r^2}{nr^2}-n(n-1)\lambda^2,~~~H_1=\frac{n}{k}\lambda,$ respectively.
\end{ex}
Using the structure of the spacelike hypersurface $y_1$, we have
$$\triangle_{1}y_1=-\frac{ny_1}{r^{2}}+n\lambda \xi_1,$$
where $\triangle_{1}$ is the Laplacian with respect to the first fundamental form $<dy_1,dy_1>_1$ and $\xi_1$ is the unit normal vector field of $y_1$.

The standard embedding $y=(y_0,y_2):\mathbb{H}^{n-k}(-r)\to \mathbb{R}^{n-k+1}_1$ is  totally umbilical, thus
the scalar curvature $R_{2}=-\frac{(n-k)(n-k-1)}{r^{2}},$ and
$$\triangle_{2}y=\frac{(n-k)y}{r^{2}},$$
where $\triangle_{2}$  is the Laplacian with respect to the first fundamental form $<dy,dy>_1$.

The conformal position
vector of the spacelike  hypersurface is $$Y=(y_0,y_1,y_2):M^k\times \mathbb{H}^{n-k}(-r)\rightarrow  \mathbb{R}_{2}^{n+3}.$$

Since the conformal metric $g=<dY,dY>$,
$$N=\big((-\frac{1}{2r^{2}}+\frac{\lambda^{2}}{2})y_0,(\frac{1}{2r^{2}}+\frac{\lambda^{2}}{2})y_1-\lambda \xi_1, (-\frac{1}{2r^{2}}+\frac{\lambda^{2}}{2})y_2\big).$$
We take a local orthonormal basis $\{e_{p}, p=1,...,k\}$ on  $TM^{k}$, and $\{e_{q}, q=k+1,...,n\}$ on $T\mathbb{H}^{n-k}(-r)$.  Thus
$\{e_1,\cdots,e_k,e_{k+1},\cdots,e_n\}$ is a local orthonormal basis on $T(M^k\times \mathbb{H}^{n-k}(-r))$ and
 $$Y_{i}=(0,e_{i}(y_1), \overrightarrow{0}),~1\leq i\leq k,~~ Y_{j}=(e_j(y_0),0, e_{j}(y_2)),~k+1\leq j\leq n,~~ \xi=(0,\xi_1,\overrightarrow{0}).$$
Under the local basis $\{e_1,\cdots,e_n\}$, using
$A_{ij}=<Y_{i}, N_{j}>_2,   B_{ij}=<Y_{i}, \xi_{j}>_2,$ and $D_{ij}=A_{ij}+\lambda B_{ij}$, we have $C=0$ and
\begin{equation}\label{ex5i}
\begin{split}
&(A_{ij})=(\frac{1+\lambda^{2}r^{2}}{2r^{2}}\delta_{ij}-\lambda h_{ij})\oplus(\frac{\lambda^{2}r^{2}-1}{2r^{2}}\delta_{st}),~~1\leq i,j\leq k,~~ k+1\leq s,t\leq n,\\
&(B_{ij})=(h_{ij}-\lambda\delta_{ij})\oplus(-\lambda \delta_{st}),~~~~~~~~~1\leq i,j\leq k,~~ k+1\leq s,t\leq n,\\
&(D^{\lambda}_{ij})=\frac{1-\lambda^{2}r^{2}}{2r^{2}}I_{m}\oplus (-\frac{1+\lambda^{2}r^{2}}{2r^{2}})\delta_{st},~~~1\leq i,j\leq k,~~ k+1\leq s,t\leq n.
\end{split}
\end{equation}

\begin{ex}\label{ex6}
Given constants $\lambda, r (r> 0)$, let
$$y=(y_0,\tilde{y}_0,y_1): M^k\to \mathbb{H}^{k+1}_1(-r)\subset \mathbb{R}^{k+2}_2,~~2\leq k\leq n-1,$$
be a spacelike hypersurface with constant scalar curvature $R_1$ and the mean curvature $H_1$ satisfying
$R_1=\frac{-nk(k-1)+(n-1)r^2}{nr^2}-n(n-1)\lambda^2,~H_1=\frac{n}{k}\lambda$, respectively.

 Since $-y_0^2-\tilde{y}_0^2+<y_1,y_1>=-r^2$,
$y_0$ and $\tilde{y}_0$ can not be zero simultaneously.
Without loss of generality, we assume that $y_0\neq 0$. In this case, we define the spacelike hypersurface
$$x=\big(\frac{\tilde{y}_0}{|y_0|},\frac{y_1}{|y_0|},\frac{y_2}{|y_0|}\big): M^k\times \mathbb{S}^{n-k}(r)\to \mathbb{S}^{n+1}_1(1),$$
where $y_2:\mathbb{S}^{n-k}(r)\to \mathbb{R}^{n-k+1} $ is a round sphere with radius $r$.
\end{ex}
Using the structure equation of the spacelike hypersurface $y$, we have
$$\triangle_{1}y=\frac{ky}{r^{2}}+n\lambda\xi_1,$$
where $\triangle_{1}$ is the Laplacian with respect to the first fundamental form $<dy,dy>_1$ and $\xi_1$ is the unit normal vector field of $y$.

The round sphere $y_2:\mathbb{S}^{n-k}(r)\rightarrow R^{n-k+1}$ is totally umbilical, thus
the scalar curvature is $R_{2}=\frac{(n-k)(n-k-1)}{r^{2}},$ and
$$\triangle_{2}y_2=-\frac{(n-k)y_2}{r^{2}},$$
where $\triangle_{2}$  is the Laplacian with respect to the first fundamental form $<dy_2,dy_2>$.

The conformal position
vector of the spacelike  hypersurface is
$$Y=(y,y_2):M^k\times \mathbb{S}^{n-k}(r)\to \mathbb{R}^{n+3}_2.$$
Since the conformal metric $g=<dY,dY>$, we have
$$N=((-\frac{1}{2r^{2}}+\frac{\lambda^{2}}{2})y-\lambda \xi_1, (\frac{1}{2r^{2}}+\frac{\lambda^{2}}{2})y_2).$$
We take a local orthonormal basis $\{e_{p}, p=1,...,k\}$ on $TM^k$, and $\{e_{q}, q=k+1,...,n\}$ on $T\mathbb{S}^{n-k}(r)$.  Thus
$\{e_1,\cdots,e_k,e_{k+1},\cdots,e_n\}$ is a local orthonormal basis on $T(M^k\times \mathbb{S}^{n-k}(r))$ and
 $$Y_{i}=(e_{i}(y), \overrightarrow{0}),~1\leq i\leq k,~~ Y_{j}=(\overrightarrow{0}, e_{j}(y_2)),~k+1\leq j\leq n,~~ \xi=(\xi_1,\overrightarrow{0}).$$
Under the local basis $\{e_1,\cdots,e_n\}$, using
$A_{ij}=<Y_{i}, N_{j}>_2, B_{ij}=<Y_{i}, \xi_{j}>_2,$
we have $C=0$ and
\begin{equation}\label{ex6i}
\begin{split}
&(A^{ij})=(\frac{\lambda^{2}r^{2}-1}{2r^{2}}\delta_{ij}-\lambda h_{ij})\oplus(\frac{\lambda^{2}r^{2}+1}{2r^{2}}\delta_{st}),~~1\leq i,j\leq k,~~ k+1\leq s,t\leq n,\\
&(B_{ij})=(h_{ij}-\lambda\delta_{ij})\oplus(-\lambda \delta_{st}), ~~~1\leq i,j\leq k,~~ k+1\leq s,t\leq n,\\
&(D^{\lambda}_{ij})=-\frac{1+\lambda^{2}r^{2}}{2r^{2}}\delta_{ij}\oplus \frac{1-\lambda^{2}r^{2}}{2r^{2}}\delta_{st},~~1\leq i,j\leq k,~~ k+1\leq s,t\leq n.
\end{split}
\end{equation}

In \cite{lin1}, authors classified completely the conformal isoparametric spacelike hypersurfaces in $M^{n+1}_1(c)$.
\begin{theorem}\cite{lin1}\label{th1}
Let $x: M^n\to M^{n+1}_1(c)$ be a spacelike  hypersurface in $M_1^{n+1}(c)$ with two distinct principal curvatures.  If
the conformal form vanishes, then
locally $x$
is conformally equivalent to one of the following hypersurfaces\\
(1), $\mathbb{S}^k(\sqrt{a^2+1})\times\mathbb{H}^{n-k}(-a)\subset\mathbb{S}^{n+1}_1(1), ~~a>0, ~1\leq k\leq n-1;$\\
(2), $\mathbb{H}^k(-a)\times\mathbb{H}^{n-k}(-\sqrt{1-a^2})\subset\mathbb{H}^{n+1}_1(-1),~~0<a<1,~1\leq k\leq n-1;$\\
(3), $\mathbb{H}^k(-a)\times\mathbb{R}^{n-k}
\subset\mathbb{R}^{n+1}_1,~~a>0,~1\leq k\leq n-1.$
\end{theorem}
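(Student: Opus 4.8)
The plan is to work entirely from the integrability conditions \eqref{stru1}--\eqref{cond1}, specialized to $C=0$ and to a frame adapted to the two eigenspaces of $B$. Write $V_1,V_2$ for the eigendistributions of $B$, with (locally constant) multiplicities $k$ and $n-k$ and eigenvalues $b_1,b_2$; I use indices $a,b,c$ for $V_1$ and $p,q,r$ for $V_2$, so $B_{ab}=b_1\delta_{ab}$, $B_{pq}=b_2\delta_{pq}$, $B_{ap}=0$. The first point is that $b_1,b_2$ are automatically constant: the algebraic identities $\sum_i B_{ii}=0$ and $\sum_{ij}B_{ij}^2=\frac{n-1}{n}$ from \eqref{cond1} read $kb_1+(n-k)b_2=0$ and $kb_1^2+(n-k)b_2^2=\frac{n-1}{n}$, forcing $b_1^2=\frac{(n-1)(n-k)}{n^2k}$, $b_2^2=\frac{(n-1)k}{n^2(n-k)}$ with $b_1b_2<0$. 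Thus the hypersurface is automatically conformal isoparametric, with eigenvalues matching those computed in Examples \ref{ex1}--\ref{ex3}.

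Next I would exploit $C=0$ to split the conformal metric. Then \eqref{stru2} gives $B_{ij,k}=B_{ik,j}$, so $B_{ij,k}$ is totally symmetric in $i,j,k$. A direct computation from the definition of the covariant derivative yields $B_{ab,k}=\delta_{ab}(b_1)_k=0$ and $B_{pq,k}=0$ (constancy of $b_1,b_2$), while $B_{ap,k}=(b_1-b_2)(\omega_{ap})_k$. Since any triple of indices containing a mixed pair can, by total symmetry, be rearranged so that the two tensor slots lie in a single block, every $B_{ap,k}$ vanishes; as $b_1\neq b_2$ this forces all mixed connection forms $\omega_{ap}\equiv 0$. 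Hence $V_1$ and $V_2$ are parallel and the conformal metric $g$ splits locally as a Riemannian product $M_1^{k}\times M_2^{n-k}$ with $TM_1=V_1$, $TM_2=V_2$. (When a multiplicity equals $1$ some index manipulations become vacuous, but the corresponding block is then scalar trivially.)

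To identify the factors I would use the remaining equations. From \eqref{stru3} with $C=0$ one gets $[A,B]=0$, so $A$ is block-diagonal ($A_{ap}=0$), and \eqref{stru1} with $C=0$ makes $A_{ij,k}$ totally symmetric. Evaluating the Gauss equation \eqref{stru4} on mixed planes, where the product structure forces $R_{apap}=R_{apbp}=0$, gives $A_{aa}+A_{pp}=b_1b_2$ and $A_{ab}=0$ for $a\neq b$ (and symmetrically in $V_2$); hence $A=a_1\,\mathrm{Id}$ on $V_1$ and $A=a_2\,\mathrm{Id}$ on $V_2$ with constants satisfying $a_1+a_2=b_1b_2$. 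Feeding this back into \eqref{stru4} for indices inside a single block shows each factor has constant sectional curvature $c_1=2a_1-b_1^2$, $c_2=2a_2-b_2^2$, i.e. each $M_i$ is a space form. The single free constant among $a_1,a_2$ (the trace identity in \eqref{cond1} being automatically consistent) is exactly the parameter $a$ of the three families.

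Finally I would invoke the conformal congruence theorem: having pinned down $g$ as a product of two space forms of curvatures $c_1,c_2$ and $B$ as two constant eigenvalues of multiplicities $k,n-k$, the hypersurface is conformally determined, so matching these invariants against the values computed for the embedded products of Examples \ref{ex1}--\ref{ex3} shows $x$ is locally conformal to one of (1)--(3), the three cases corresponding to the admissible signs of $(c_1,c_2)$ compatible with $b_1b_2<0$. I expect the genuine difficulty to sit in this reconstruction step rather than in the tensor algebra: one must either integrate the moving-frame system \eqref{struct} for $(Y,N,\xi,Y_i)$ explicitly or check that the three families exhaust all admissible $(c_1,c_2)$, and throughout one must keep track of the Lorentzian signs (in particular $\langle\xi,\xi\rangle_2=-1$ and the indefinite ambient products) to land in the correct ambient space form $\mathbb{S}^{n+1}_1(1)$, $\mathbb{H}^{n+1}_1(-1)$ or $\mathbb{R}^{n+1}_1$.
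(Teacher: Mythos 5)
A preliminary caveat on the comparison itself: the paper does not prove this statement --- Theorem \ref{th1} is imported verbatim from \cite{lin1}, whose proof is not reproduced here --- so your proposal can only be judged on its own merits within the paper's framework \eqref{stru1}--\eqref{cond1}, not against an in-paper argument. On those terms, your tensor-level analysis is correct and is the natural route. The two trace identities in \eqref{cond1} do pin $b_1,b_2$ to the constant values of Examples \ref{ex1}--\ref{ex3} with $b_1b_2<0$; total symmetry of $B_{ij,k}$ (from \eqref{stru2} with $C=0$) plus your pigeonhole rearrangement does kill every component with a mixed index pair, so $(b_1-b_2)\,\omega_{ap}=0$ forces the de Rham splitting; and $[A,B]=0$ from \eqref{stru3} together with the vanishing of the mixed curvature components $R_{apap}$, $R_{apbp}$ in \eqref{stru4} gives the block-scalar form of $A$ with $a_1+a_2=b_1b_2$ and each factor of constant curvature $c_i=2a_i-b_i^2$. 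One omission in this part: you assert $a_1,a_2$ are \emph{constants}, but your pointwise identities only show they are well-defined functions on $M$. The fix is exactly the tool you name but never use: $\omega_{ap}=0$ and $A_{ap}=0$ give $A_{ap,k}=0$, and total symmetry of $A_{ij,k}$ (from \eqref{stru1} with $C=0$) then yields $E_p(a_1)=A_{aa,p}=A_{ap,a}=0$ and $E_a(a_2)=A_{pp,a}=A_{pa,p}=0$; since $a_1+a_2=b_1b_2$ is constant, $da_1=da_2=0$ follows even when a block has multiplicity one, where within-block index manipulations are vacuous.

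The genuine gap is the reconstruction step, which you candidly leave as a plan rather than a proof. It is not automatic, but it is finite once you record the relation $c_1+c_2=2(a_1+a_2)-b_1^2-b_2^2=-(b_1-b_2)^2=-\frac{n-1}{k(n-k)}$: the curvature pair lies on a fixed line with negative sum, so up to swapping blocks only the sign patterns $(+,-)$, $(-,-)$, $(-,0)$ survive, and one must check that the parameter $a$ in each family sweeps the whole admissible segment of that line. It does --- for instance in Example \ref{ex2} one computes $e^{2\tau}=\frac{k(n-k)}{(n-1)a^2(1+a^2)}$, whence $-c_2/c_1=(1+a^2)/a^2$ covers exactly $(1,\infty)$, the range forced by $c_1>0>c_2$ with fixed negative sum, while in Example \ref{ex1} all values of $a$ give conformally equivalent hypersurfaces under homotheties of $\mathbb{R}^{n+1}_1$, matching the fact that the pattern $(-,0)$ determines $(c_1,c_2)$ uniquely --- but none of these verifications appear in your text, and without them ``matching against the examples'' is a statement of intent, not an argument. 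A second, smaller hole in the same step: your closing appeal is to the conformal congruence theorem, which the paper states only for $n\geq3$, while the theorem as quoted includes $n=2$ (both multiplicities one); that case needs a separate justification. With the constancy argument for $a_1,a_2$ and this exhaustion-plus-congruence step written out, your outline closes into a complete proof.
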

\begin{theorem}\cite{lin1}\label{th2}
Let $x: M^n\to M^{n+1}_1(c)$ be a conformal isoparametric spacelike hypersurface in $M_1^{n+1}(c)$ with $r$ distinct principal curvatures. If $r\geq 3$, then
$r=3$, and locally $x$
is conformally equivalent to the following hypersurface
$$
 x:\mathbb H^{q}(-\sqrt{a^2-1})\times\mathbb S^{p}(a)\times
 \mathbb R^+\times\mathbb R^{n-p-q-1}\rightarrow
 \mathbb R^{n+1}_1 ,$$ defined by $$
  x(u',u'',t,u''')=(tu',tu'',u'''),$$ where $u'\in \mathbb H^{q}(-\sqrt{a^2-1}), u''\in\mathbb S^{p}(a),u'''\in\mathbb R^{n-p-q-1},~~a>1.$
\end{theorem}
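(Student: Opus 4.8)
The plan is to run the standard conformal moving-frame analysis of Section~2, specializing the integrability conditions to the hypothesis $C=0$ and then exploiting the constancy of the eigenvalues of $B$. Throughout I work with the conformal metric $g$, which is positive definite by Theorem~\ref{t22}, so $A$ and $B$ are genuine real symmetric operators and the argument proceeds in parallel to the Riemannian case despite the Lorentzian ambient space. Setting $C=0$ in (\ref{stru1}), (\ref{stru2}) and (\ref{stru3}) gives $A_{ij,k}=A_{ik,j}$ and $B_{ij,k}=B_{ik,j}$, so both $A$ and $B$ are Codazzi tensors, together with $\sum_k(B_{ik}A_{kj}-B_{jk}A_{ki})=0$, i.e. $A$ and $B$ commute. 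Hence I choose the orthonormal frame $\{E_1,\dots,E_n\}$ to diagonalize $A$ and $B$ simultaneously, writing $B_{ij}=b_i\delta_{ij}$ and $A_{ij}=a_i\delta_{ij}$, where $b_1,\dots,b_r$ are the $r$ distinct constant eigenvalues of $B$ with eigendistributions $V_1,\dots,V_r$.

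Next I would extract the consequences of the Codazzi equation for $B$. Writing $\omega_{ij}=\sum_k\gamma_{ijk}\omega_k$ and computing $\sum_k B_{ij,k}\omega_k=\mathrm dB_{ij}+\sum_k B_{ik}\omega_{kj}+\sum_k B_{kj}\omega_{ki}$, one finds $B_{ij,k}=(b_i-b_j)\gamma_{ijk}$ and, since the $b_i$ are constant, $B_{ij,k}=0$ whenever $E_i,E_j$ lie in the same eigenspace. The Codazzi symmetry $B_{ij,k}=B_{ik,j}$ then yields two facts: each eigendistribution $V_\alpha$ is autoparallel, $\nabla_{V_\alpha}V_\alpha\subset V_\alpha$, hence integrable and totally geodesic for $g$; and for a triple of mutually distinct eigenspaces the connection coefficients obey a cyclic relation of the form $(b_\alpha-b_\beta)\gamma_{ijk}=(b_\beta-b_\gamma)\gamma_{jki}=(b_\gamma-b_\alpha)\gamma_{kij}$. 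The same computation for $A$ gives $A_{ij,k}=(a_i-a_j)\gamma_{ijk}$, and feeding this through the $A$-Codazzi equations constrains how the $a_i$ may vary along each $V_\alpha$.

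The decisive step, and the main obstacle, is to combine these connection relations with the Gauss equation (\ref{stru4}) and the normalizations in (\ref{cond1}), namely $\sum_i b_i=0$ and $\sum_i b_i^2=(n-1)/n$, to show that $r$ cannot exceed $3$ and that, when $r=3$, the eigenvalues and multiplicities are completely rigid. Concretely, I expect to evaluate $R_{ijkl}$ on mixed triples of eigenspaces two ways — from the structure-equation expression in terms of $A$ and $B$, and from the commutation identities associated with the autoparallel splitting — and to argue that the cyclic connection relation is incompatible with three or more distinct constant $b_\alpha$ unless the interspace connection forms degenerate to exactly the configuration realized by a triple product. Controlling these off-diagonal forms and excluding $r\ge 4$ is the crux; the trace-free and norm constraints on $B$ then leave only finitely many admissible eigenvalue patterns, all of which reduce to the three-eigenvalue case.

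Finally, once the eigendistributions are totally geodesic and the remaining connection forms are controlled, the conformal metric $g$ splits as a Riemannian product of three constant-curvature factors and the conformal position vector $Y$ decomposes accordingly. I would then integrate the structure equations (\ref{struct}) to recover the immersion explicitly and match it, using the computation in (\ref{metric}), against Example~\ref{ex4}. This identifies $x$, up to a conformal transformation of $M^{n+1}_1(c)$, with $\mathbb H^{q}(-\sqrt{a^2-1})\times\mathbb S^{p}(a)\times\mathbb R^+\times\mathbb R^{n-p-q-1}\to\mathbb R^{n+1}_1$, completing the classification with $r=3$.
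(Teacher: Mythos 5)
This statement is quoted from \cite{lin1} and the present paper contains no proof of it at all, so there is no internal argument to compare against; the honest comparison is with the method of the cited source, which is a substantial classification of spacelike Dupin hypersurfaces, not a few pages of frame computations. Your opening is nevertheless correct and standard: with $C=0$, equations (\ref{stru1})--(\ref{stru3}) do make $A$ and $B$ commuting Codazzi tensors, simultaneous diagonalization is legitimate since $g$ is positive definite, $B_{ij,k}=(b_i-b_j)\gamma_{ijk}$ with constant $b_i$ gives autoparallel eigendistributions, and $B_{ij,k}$ is totally symmetric, yielding your cyclic relation. Up to that point you have reproduced the routine setup that any proof of this theorem would begin with.

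The problem is that everything after that --- which is the entire content of the theorem --- is deferred with ``I expect to evaluate \dots and to argue that \dots''. This is not a fixable presentational gap but a missing idea, and your own framing points at why: you assert the argument ``proceeds in parallel to the Riemannian case despite the Lorentzian ambient space.'' In the Riemannian/M\"obius setting the identical formal apparatus (commuting Codazzi tensors, the Gauss equation, the normalizations $\sum_i b_i=0$ and $\sum_i b_i^2=(n-1)/n$ from (\ref{cond1})) does \emph{not} bound the number of distinct eigenvalues: Euclidean isoparametric hypersurfaces of spheres have constant conformal factor, hence are M\"obius isoparametric, and Cartan--FKM examples realize $r=4$ and $r=6$. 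So the combination of identities you list cannot by itself exclude $r\geq 4$; any correct proof must exploit the Lorentzian-specific sign in (\ref{stru4}), which gives $R_{ijij}=-b_ib_j+a_i+a_j$ rather than $+b_ib_j+\cdots$, and your sketch never identifies where that sign enters or how it kills $r\geq 4$. Two further steps you treat as afterthoughts are also part of the crux: the mixed connection forms $\omega_{p\alpha}$ do not vanish merely because the $b_i$ are constant (in the paper's Proposition \ref{pro3} the analogous vanishing is derived from parallelism of $D^{\lambda}$, not from constancy of eigenvalues), so the product splitting of $(M^n,g)$ must be proved, not assumed; and even granted a splitting, you must rule out configurations in which a factor is a general constant-mean-curvature hypersurface rather than a constant-curvature space --- exactly the phenomenon that produces Examples \ref{ex5} and \ref{ex6} in the two-eigenvalue para-Blaschke case --- before the integration of (\ref{struct}) can land on the cone of Example \ref{ex4}. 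As it stands, your proposal is a correct setup plus an unproven expectation at the theorem's core, and it does not establish the statement.
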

The following theorem is need in the proof of the main theorem, readers  refer \cite{lin}.
\begin{theorem}\cite{lin}\label{abc1}
Let $x:M^n\rightarrow M^{n+1}_1(c)$ be a spacelike hypersurface
without umbilical points. If conformal invariants of $x$ satisfy
\begin{equation*}
(1), C=0,\;\;\; (2), A=\mu B+\lambda g,
\end{equation*}
Then $x$ is conformally equivalent to a spacelike hypersurface with
constant mean curvature and constant scalar curvature.
\end{theorem}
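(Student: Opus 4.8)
The plan is to convert the two hypotheses into a single constant vector of the ambient space $\mathbb{R}^{n+3}_2$ and then to read off a space-form representative from its causal character. First I substitute $C_i=0$ into the structure equations (\ref{struct}), which reduces them to $\mathrm{d}Y=\sum_i\omega_iY_i$, $\mathrm{d}\xi=\sum_{ij}B_{ij}\omega_jY_i$ and $\mathrm{d}N=\sum_{ij}A_{ij}\omega_jY_i$. Inserting $A_{ij}=\mu B_{ij}+\lambda\delta_{ij}$ into the last one gives $\mathrm{d}N=\mu\,\mathrm{d}\xi+\lambda\,\mathrm{d}Y$, so the vector $P:=N-\mu\xi-\lambda Y$ satisfies $\mathrm{d}P=0$ and is constant. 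Using the frame relations $\langle N,Y\rangle_2=1$, $\langle N,N\rangle_2=\langle Y,Y\rangle_2=0$, $\langle\xi,\xi\rangle_2=-1$ and the orthogonalities, I record the constants $\langle P,P\rangle_2=-\mu^2-2\lambda$, $\langle Y,P\rangle_2=1$, $\langle N,P\rangle_2=-\lambda$, $\langle\xi,P\rangle_2=\mu$ and $\langle Y_i,P\rangle_2=0$.

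The scalar curvature is already controlled at the conformal level. Taking the trace of $A=\mu B+\lambda g$ and using $\sum_iB_{ii}=0$ from (\ref{cond1}) gives $\mathrm{tr}(A)=n\lambda$, whence the first identity in (\ref{cond1}) forces the normalized scalar curvature of $g$ to be the constant $\kappa=2\lambda+\frac{1}{n^2}$. Thus it only remains to exhibit a representative of the conformal class whose isometric mean curvature is constant; once $g$ and the induced metric are homothetic, the constancy of $\kappa$ will transfer to the isometric scalar curvature.

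The representative is chosen according to the sign of $\langle P,P\rangle_2=-\mu^2-2\lambda$. Using the group $O(n+3,2)$, I normalize $P$ to a standard constant vector; whether it is timelike, lightlike or spacelike determines which of $\sigma_1,\sigma_0,\sigma_{-1}$ is adapted to $P$, realizing $x$ as conformally equivalent to a spacelike hypersurface $\tilde x$ in $\mathbb{S}^{n+1}_1(1)$, $\mathbb{R}^{n+1}_1$ or $\mathbb{H}^{n+1}_1(-1)$ whose canonical lift $\tilde y$ satisfies $\langle\tilde y,P\rangle_2\equiv\mathrm{const}$. Since $\tilde Y=e^{\tau}\tilde y$ and $\langle\tilde Y,P\rangle_2=1$, this pins $e^{\tau}$ to a constant, so $g$ is homothetic to the induced metric and the scalar curvature of $\tilde x$ is constant by the previous step. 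Finally, plugging the expression for $\xi$ from (\ref{coff0}), (\ref{coff01}), (\ref{coff02}) into $\langle\tilde\xi,P\rangle_2=\mu$ and noting that the non-$\tilde y$ part of $\xi$ pairs to a constant against $P$, I obtain that the mean curvature $H$ of $\tilde x$ is constant, which is the assertion.

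\textbf{The main obstacle} is the bookkeeping of this trichotomy: in each causal case one must verify that $P$ really can be carried by $O(n+3,2)$ onto the defining vector of the matching model, so that both $\langle\tilde y,P\rangle_2$ and the pairing of $P$ with the normal term of $\xi$ become constant, and that the image $\tilde x$ remains a genuine spacelike hypersurface inside the corresponding chart $\mathbb{Q}^{n+1}_1\setminus P$ (respectively $P_\pm$). A secondary point, relevant only if $\mu$ and $\lambda$ are a priori functions rather than constants, is to first establish $\mathrm{d}\mu=\mathrm{d}\lambda=0$; this comes from differentiating $A_{ij}=\mu B_{ij}+\lambda\delta_{ij}$ and comparing with the Codazzi identities (\ref{stru1}) and (\ref{stru2}), which under $C=0$ read $A_{ij,k}=A_{ik,j}$ and $B_{ij,k}=B_{ik,j}$, together with $\sum_iB_{ii}=0$ and $\sum_{ij}B_{ij}^2=\frac{n-1}{n}$ which prevent $B$ from being a pure trace.
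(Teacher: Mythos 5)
The paper does not actually prove this theorem itself---it quotes it from \cite{lin}---and your argument reproduces the standard proof given there, correctly in all essentials: under $C=0$ and $A=\mu B+\lambda g$ with $\mu,\lambda$ constant, the structure equations (\ref{struct}) make $P=N-\mu\xi-\lambda Y$ a constant vector with $\langle P,P\rangle_2=-\mu^2-2\lambda$, $\langle Y,P\rangle_2=1$, $\langle\xi,P\rangle_2=\mu$, its causal character selects the model $\mathbb{S}^{n+1}_1(1)$, $\mathbb{R}^{n+1}_1$ or $\mathbb{H}^{n+1}_1(-1)$ via $\sigma_c$ (with $O(n+3,2)$ acting transitively on vectors of fixed nonzero norm and on null vectors, and with the non-$y$ part of $\xi$ in (\ref{coff0})--(\ref{coff02}) pairing to zero against the normalized $P$), so that $e^\tau$ and then $H$ are constant, while $\mathrm{tr}(A)=n\lambda$ together with (\ref{cond1}) gives constant scalar curvature. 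Your only shaky point is the optional closing remark: diagonalizing $B$, differentiating $A_{ij}=\mu B_{ij}+\lambda\delta_{ij}$ and invoking the Codazzi identities (\ref{stru1})--(\ref{stru2}) yields exactly $\mu_k b_i+\lambda_k=0$ for all $i\neq k$, which does \emph{not} by itself force $d\mu=d\lambda=0$ --- it still permits $\nabla\mu$ along a single direction $E_k$ provided $B$ has eigenvalues $\pm\frac{1}{n}$ (multiplicity $n-1$) and $\mp\frac{n-1}{n}$ (multiplicity $1$), so ruling this out would need a further argument --- but this is immaterial here since the theorem as applied (e.g.\ in Proposition \ref{pro1}, where $A=-\lambda B+\mu g$ with both coefficients constant) concerns constant $\mu,\lambda$.
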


\par\noindent
\section{Proof of the main Theorem}
{\bf Proof of Theorem \ref{theorem1}}. From Theorem \ref{th1},  Theorem \ref{th2}, Example \ref{ex1}, Example \ref{ex2}, Example \ref{ex3} and Example \ref{ex4}, we know that
if the spacelike hypersurface is conformally isoparametric, then the spacelike hypersurface is also para-Blaschke isoparametric.

Next we assume that  the spacelike hypersurface is  a para-Blaschke isoparametric spacelike  hypersurface and the number
of the distinct eigenvalues of the para-Blaschke tensor $D^{\lambda}$ is more than two. Since the conformal $1$-form vanishes, we can have a local local orthonormal basis
$\{E_1,\cdots,E_n\}$ such that
$$(B_{ij})=diag(b_1,\cdots,b_n),~~(A_{ij})=diag(a_1,\cdots,a_n),~~(D^{\lambda}_{ij})=diag(d_1,,d_2,\cdots,d_n).$$
Using the  covariant derivative $dD^{\lambda}_{ij}+\sum_kD^{\lambda}_{kj}\omega_{ki}+\sum_kD^{\lambda}_{ik}\omega_{kj}=\sum_kD^{\lambda}_{ij,k}\omega_k$, we have
\begin{equation}\label{der1}
(d_i-d_j)\omega_{ij}=\sum_kD^{\lambda}_{ij,k}\omega_k.
\end{equation}
For each $i$ fixed, we define the index set $[i] =\{m| d_m = d_i\}.$ We have the following results
\begin{equation}\label{der2}
\begin{split}
&D^{\lambda}_{ij,k}=0,~~when~~[i]=[j], or [i]=[k], or [j]=[k].\\
&\omega_{ij}=\sum_{k\notin [i],[j]}\frac{D^{\lambda}_{ij,k}}{d_i-d_j}\omega_k,~~when~~[i]\neq [j].
\end{split}
\end{equation}
The second covariant derivative of   $D^{\lambda}_{ij}$ is defined by
 $$\sum_{l}D^{\lambda}_{ij,kl}\omega_{l}=dD^{\lambda}_{ij,k}+\sum_{l}(D^{\lambda}_{lj,k}\omega_{li}+D^{\lambda}_{il,k}\omega_{lj}+D^{\lambda}_{ij,l}\omega_{lk}).$$
 Let $[i]\neq [j]$, we have
 $$ D^{\lambda}_{ij,ij}=\sum_{k\notin [i],[j]}\frac{2(D^{\lambda}_{ij,k})^2}{d_{k}-d_{i}}, ~~
 D^{\lambda}_{ij,ji}=\sum_{k\notin [i],[j]}\frac{2(D^{\lambda}_{ij,k})^{2}}{d_{k}-d_{j}}.$$
 Using the Ricci identities
 $D^{\lambda}_{ij,ij}-D^{\lambda}_{ij,ji}=\sum_{m}(D^{\lambda}_{mj}R_{miij}+D^{\lambda}_{im}R_{mjij}),$ we get
\begin{equation}\label{der3}
R_{ijij}=\sum_{k\notin [i],[j]}\frac{2(D^{\lambda}_{ij,k})^{2}}{(d_{k}-d_{i})(d_{k}-d_{j})}.
\end{equation}
For the conformal second fundamental form $B$, we have
$$(b_{i}-b_{j})\omega_{ij}=\sum_{k}B_{ij,k}\omega_{k}.$$
Using (\ref{der2}), we get
\begin{equation}\label{der4}
(b_{i}-b_{j})\frac{D^{\lambda}_{ij,k}}{d_{i}-d_{j}}=B_{ij,k},\ \ \ [i]\neq [j].
\end{equation}
Let $k=i$ in (\ref{der4}), and using  $D^{\lambda}_{ij,i}=0$ we obtain
\begin{equation}\label{der5}
E_{j}(b_{i})=B_{ii,j}=B_{ij,i}=0, \ \  [i]\neq [j].
\end{equation}
In order to prove that $b_{i}$ is a constant, we only need to prove
$$E_{j}(b_{i})=0, \ \  [i]= [j].$$

For each $i$ fixed, we consider two cases:\\
{\bf Case 1}. There exist $j,\ k$ such that
$$ D^{\lambda}_{ij,k}\neq 0, \ \ [j]\neq [i], \ \ [k]\neq [i], \ \ [j]\neq [k].$$
{\bf Case 2}. For all $j,\ k$, we have $ D^{\lambda}_{ij,k}=0$.

Now we consider Case 1, since
$$ D^{\lambda}_{ij,k}\neq 0, \ \  [j]\neq [i], \ \ [k]\neq [i], \ \ [j]\neq [k],$$
from (\ref{der4}), we get
$$\frac{b_{i}-b_{j}}{d_{i}-d_{j}}=\frac{B_{ij,k}}{D^{\lambda}_{ij,k}}=\frac{B_{jk,i}}{D^{\lambda}_{jk,i}}
=\frac{b_{k}-b_{j}}{d_{k}-d_{j}}.$$
Thus
$$b_{i}=(b_{k}-b_{j})\frac{d_{i}-d_{j}}{d_{k}-d_{j}}+b_{j}.$$
From (\ref{der5}), since $E_{l}(b_{k})=E_{l}(b_{j})=0, \ \  l\in [i]$, we have
\begin{equation}\label{der6}
E_{l}(b_{i})=0, \ \  [i]=[l].
\end{equation}

For Case 2. Since  $ D^{\lambda}_{ij,k}=0, \ \forall j, k$, from (\ref{der3}), we get
$$R_{ijij}=0, \  \ \ j\notin [i].$$
From (\ref{stru4}) we have
$0=R_{ijij}=-b_{i}b_{j}+a_{i}+a_{j},  \ j\notin [i].$
Since $a_{i}=d_{i}-\lambda b_{i}$, we have
$$-b_{i}b_{j}+d_{i}+d_{j}=\lambda (b_{i}+b_{j}),\   j\notin [i].$$
Note that the number
of the distinct eigenvalues of the para-Blaschke tensor $D^{\lambda}$ is more than two,
we can take $j, k$ such that $[j]\neq [i],$  $[k]\neq [i]$  and $[k]\neq [j]$, and
$$-b_{i}b_{k}+d_{i}+d_{k}=\lambda (b_{i}+b_{k}),  \ j\notin [i].$$
Thus
$$-(b_{i}+\lambda)(b_{j}-b_{k})=d_{k}-d_{j}.$$
namely $$b_{i}=\frac{d_{j}-d_{k}}{b_{j}-b_{k}}+\lambda.$$
Noting $E_{l}(b_{j})=E_{l}(b_{k})=0, \ \  l\in [i], \ j, k\notin [i],$  we obtain
\begin{equation}\label{der7}
E_{l}(b_{i})=0, \ \  l\in [i].
\end{equation}
From (\ref{der5}), (\ref{der6}) and (\ref{der7}), it concludes that
$$E_{j}(b_{i})=0,\ \ 1\leq i, \ j\leq n.$$
Thus $\{b_{i}| i=1,\cdots,n\}$ are constant and $x$ is conformal isoparametric spacelike hypersurface. Thus we complete the proof of Theorem \ref{theorem1}.

Next we divide Theorem \ref{theorem2}  into three cases. If the number of the distinct eigenvalues of the para-Blaschke tensor is $1$, using Theorem \ref{abc1}, then we have the following proposition.
\begin{PROPOSITION}\label{pro1}
Let  $x:M^{n}\rightarrow M^{n+1}_1(c)$ be a para-Blaschke isoparametric spacelike hypersurface with $r$  distinct eigenvalues of the para-Blaschke tensor. If
$r=1$, then $x$ is conformally equivalent to a spacelike hypersurface with
constant mean curvature and constant scalar curvature in $M^{n+1}_1(c)$.
\end{PROPOSITION}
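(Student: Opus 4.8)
The plan is to invoke Theorem \ref{abc1} directly. The hypothesis of Proposition \ref{pro1} is that $x$ is para-Blaschke isoparametric with exactly one distinct eigenvalue of $D^\lambda$, together with the standing assumption that the conformal $1$-form vanishes, i.e. $C=0$. I want to show these hypotheses force the algebraic relation $A=\mu B+\lambda g$ for some constants, so that Theorem \ref{abc1} applies and yields the desired conclusion.

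**First I would** unwind what $r=1$ means. Having a single eigenvalue for the symmetric tensor $D^\lambda$ means $D^\lambda$ is a scalar multiple of the metric: there is a constant $d$ (constant because the hypersurface is para-Blaschke isoparametric) with $D^\lambda_{ij}=d\,\delta_{ij}$ relative to a $g$-orthonormal frame, equivalently $D^\lambda=d\,g$ as $(0,2)$-tensors. By the very definition $D^\lambda=A+\lambda B$, this reads
\begin{equation*}
A+\lambda B=d\,g,
\end{equation*}
so that $A=-\lambda B+d\,g$. This is exactly condition $(2)$ of Theorem \ref{abc1} with $\mu=-\lambda$ and the additive scalar equal to the constant $d$.

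**Then** together with the hypothesis $C=0$, which is condition $(1)$ of Theorem \ref{abc1}, both hypotheses of that theorem are met. Applying Theorem \ref{abc1} gives immediately that $x$ is conformally equivalent to a spacelike hypersurface with constant mean curvature and constant scalar curvature in $M^{n+1}_1(c)$, which is precisely the assertion of the proposition.

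**The only point requiring care** is to confirm that "one distinct eigenvalue of a symmetric $(0,2)$-tensor, measured with respect to the conformal metric $g$" is genuinely equivalent to proportionality to $g$; this is just the spectral theorem applied to the $g$-self-adjoint endomorphism associated with $D^\lambda$, so there is no real obstacle. One should note that no isoparametric condition on $B$ alone is needed here: constancy of the single eigenvalue $d$ is automatic from the definition of para-Blaschke isoparametric, and Theorem \ref{abc1} requires only the pointwise algebraic identity $A=\mu B+\lambda g$ plus $C=0$, not constancy of $\mu$ or the eigenvalues of $B$ themselves. Thus the proof is essentially a one-line reduction to the quoted theorem.
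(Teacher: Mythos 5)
Your proposal is correct and matches the paper's own argument: the paper proves Proposition \ref{pro1} precisely by observing that $r=1$ together with the isoparametric hypothesis gives $D^{\lambda}=A+\lambda B=d\,g$, hence $A=-\lambda B+d\,g$ with $C=0$, and then invoking Theorem \ref{abc1}. Your only additions --- spelling out the spectral-theorem equivalence and the identification $\mu=-\lambda$ with additive constant $d$ --- are harmless elaborations of the same one-line reduction.
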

If the number of the distinct eigenvalues of the para-Blaschke tensor is more than two, then we have the following Proposition by Theorem \ref{th2} and Theorem \ref{theorem1}.
\begin{PROPOSITION}\label{pro2}
Let  $x:M^{n}\rightarrow M^{n+1}_1(c)$ be a para-Blaschke isoparametric spacelike hypersurface with $r$  distinct eigenvalues of the para-Blaschke tensor.
If  $r\geq 3$, then
$r=3$, and locally $x$
is conformally equivalent to the following hypersurface,
$$
 x:\mathbb H^{q}(-\sqrt{a^2-1})\times\mathbb S^{p}(a)\times
 \mathbb R^+\times\mathbb R^{n-p-q-1}\rightarrow
 \mathbb R^{n+1}_1 ,$$ defined by $
  x(u',u'',t,u''')=(tu',tu'',u'''),$ where $u'\in \mathbb H^{q}(-\sqrt{a^2-1}), u''\in\mathbb S^{p}(a),u'''\in\mathbb R^{n-p-q-1},~~a>1.$
\end{PROPOSITION}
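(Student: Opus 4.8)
\textbf{Proof plan for Proposition \ref{pro2}.} The plan is to assemble the statement from Theorem \ref{theorem1} and the conformal-isoparametric classification of Theorem \ref{th2}, the only genuinely new ingredient being a counting argument that promotes ``three distinct eigenvalues of $D^{\lambda}$'' to ``three distinct eigenvalues of $B$.'' First I would invoke Theorem \ref{theorem1}(2): since $x$ is para-Blaschke isoparametric and the number $r$ of distinct eigenvalues of $D^{\lambda}$ satisfies $r\geq 3$, the hypersurface $x$ is conformal isoparametric, so that $C=0$ and all eigenvalues of the conformal second fundamental form $B$ are constant. Let $s$ denote the number of distinct eigenvalues (conformal principal curvatures) of $B$. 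Because $x$ is umbilic-free, equation (\ref{cond1}) gives $\sum_i B_{ii}=0$ and $\sum_{ij}B_{ij}^2=\frac{n-1}{n}\neq 0$, whence $B\neq 0$ and therefore $s\geq 2$.

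The main step is to rule out $s=2$. Suppose, for contradiction, that $x$ has exactly two distinct conformal principal curvatures. Then $x$ satisfies the hypotheses of Theorem \ref{th1} ($C=0$ together with two distinct principal curvatures), so $x$ is locally conformally equivalent to one of the hypersurfaces of Examples \ref{ex1}, \ref{ex2}, \ref{ex3}. Since $A$, $B$, and hence $D^{\lambda}=A+\lambda B$ are conformal invariants, the eigenvalue data of $D^{\lambda}$ for $x$ must agree with that computed for the corresponding example. In each of these three examples $D^{\lambda}$ has the two-block form $\mathrm{diag}(d_1,\ldots,d_1,d_2,\ldots,d_2)$, hence at most two distinct eigenvalues, which contradicts $r\geq 3$. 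Therefore $s\geq 3$.

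Finally, $x$ is conformal isoparametric with $s\geq 3$ distinct conformal principal curvatures, so Theorem \ref{th2} applies and forces $s=3$, identifying $x$ up to local conformal equivalence with the hypersurface of Example \ref{ex4}. For that hypersurface the para-Blaschke tensor is the three-block matrix $\mathrm{diag}(d_1,\ldots,d_1,d_2,\ldots,d_2,d_3,\ldots,d_3)$, so $r\leq 3$; combined with the standing hypothesis $r\geq 3$ this yields $r=3$, which is the assertion. The delicate point is the second paragraph: a priori $B$ could have only two distinct eigenvalues while the Blaschke tensor $A$ splits one of the $B$-eigenspaces, producing a third eigenvalue of $D^{\lambda}$. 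Note that $C=0$ forces $[A,B]=0$ through (\ref{stru3}), so $A$ and $B$ are simultaneously diagonalizable and such a splitting is consistent in principle; it is precisely the explicit two-eigenvalue classification of Theorem \ref{th1}, together with the computations in Examples \ref{ex1}--\ref{ex3} showing that $A$ respects the two $B$-eigenspaces, that excludes it.
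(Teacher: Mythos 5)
Your proof is correct and follows essentially the same route as the paper, whose own argument for Proposition \ref{pro2} is simply to combine Theorem \ref{theorem1}(2) with Theorem \ref{th2}. The only step you spell out that the paper leaves implicit is the passage from $r\geq 3$ distinct eigenvalues of $D^{\lambda}$ to at least three distinct eigenvalues of $B$ (ruling out the two-eigenvalue case via Theorem \ref{th1} and the block-diagonal $D^{\lambda}$ computed in Examples \ref{ex1}--\ref{ex3}), which is a legitimate and in fact needed justification before Theorem \ref{th2} can be invoked.
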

Next we assume that the number of the distinct eigenvalues of the para-Blaschke tensor is two, we have
\begin{PROPOSITION}\label{pro3}
Let  $x:M^{n}\rightarrow M^{n+1}_1(c)$ be a para-Blaschke isoparametric spacelike hypersurface with two  distinct eigenvalues of the para-Blaschke tensor.
Then $x$ is locally conformal equivalent to one of the following hypersurfaces:\\
(1), $\mathbb{S}^k(\sqrt{a^2+1})\times\mathbb{H}^{n-k}(-a)\subset\mathbb{S}^{n+1}_1(1), ~~a>0, ~1\leq k\leq n-1;$\\
(2), $\mathbb{H}^k(-a)\times\mathbb{H}^{n-k}(-\sqrt{1-a^2})\subset\mathbb{H}^{n+1}_1(-1),~~0<a<1,~1\leq k\leq n-1;$\\
(3), $\mathbb{H}^k(-a)\times\mathbb{R}^{n-k}
\subset\mathbb{R}^{n+1}_1,~~a>0,~0\leq k\leq n-1;$\\
(4) the spacelike hypersurfaces defined by Example \ref{ex5};\\
(5) the spacelike hypersurfaces defined by Example \ref{ex6}.
\end{PROPOSITION}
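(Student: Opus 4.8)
The plan is to work in a common eigenframe. Since the conformal $1$-form vanishes, equation (\ref{stru3}) forces $A$ and $B$ to commute, so $A$, $B$ and hence $D^{\lambda}=A+\lambda B$ are simultaneously diagonalizable; fix a local orthonormal basis $\{E_1,\dots,E_n\}$ with $(B_{ij})=\mathrm{diag}(b_1,\dots,b_n)$, $(A_{ij})=\mathrm{diag}(a_1,\dots,a_n)$ and $(D^{\lambda}_{ij})=\mathrm{diag}(d_1,\dots,d_n)$, where $d_i=a_i+\lambda b_i$. Let the two distinct constant eigenvalues of $D^{\lambda}$ be $d_1,d_2$, and write $[i]$ for the eigendistribution of $E_i$ as in (\ref{der2}). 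The first step is the observation that, since there are only two eigenvalues, for $[i]\neq[j]$ every index $k$ lies in $[i]$ or in $[j]$, so the sum in the second line of (\ref{der2}) is empty and $\omega_{ij}=0$. Feeding this back into (\ref{der1}) gives $D^{\lambda}_{ij,k}=0$ for all $i,j,k$, i.e. $D^{\lambda}$ is parallel, and (\ref{der3}) yields $R_{ijij}=0$ whenever $[i]\neq[j]$.

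Next I would extract the key algebraic relation. For $[i]\neq[j]$, formula (\ref{stru4}) gives $R_{ijij}=-b_ib_j+a_i+a_j=0$; substituting $a_i=d_i-\lambda b_i$ this becomes $(b_i+\lambda)(b_j+\lambda)=d_1+d_2+\lambda^2=:\mu$, a constant valid for all $i\in[1]$, $j\in[2]$. This identity drives the whole classification through the dichotomy $\mu\neq0$ versus $\mu=0$. If $\mu\neq0$, then fixing $i\in[1]$ shows $b_j+\lambda=\mu/(b_i+\lambda)$ is independent of $j\in[2]$, and symmetrically $b$ is constant on $[1]$; combining with $\sum_iB_{ii}=0$ and $\sum_{ij}B_{ij}^2=(n-1)/n$ from (\ref{cond1}) pins the two values $b_{[1]},b_{[2]}$ down to global constants with $b_{[1]}\neq b_{[2]}$ (equality would force $B=0$, contradicting umbilic-freeness). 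Since $B_{ij}=e^{-\tau}(h_{ij}-H\delta_{ij})$ by (\ref{coff1}), the hypersurface then has exactly two distinct principal curvatures and vanishing conformal form, so Theorem \ref{th1} produces the models (1), (2) and (3).

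If instead $\mu=0$, then $(b_i+\lambda)(b_j+\lambda)=0$ for all $i\in[1]$, $j\in[2]$; ruling out $b\equiv-\lambda$ (which would make $B$ umbilic) forces exactly one block, say $[2]$, to satisfy $b_j=-\lambda$, while $b$ may vary on $[1]$. On $[2]$ both $B=-\lambda\,g$ and $A=(d_2+\lambda^2)g$ are pure trace, so by (\ref{stru4}) the induced conformal metric there has constant sectional curvature $\kappa_2=2d_2+\lambda^2$, i.e. $[2]$ integrates to a space-form factor, whereas $[1]$ is umbilic-free and carries the constant eigenvalue $d_1$ of $D^{\lambda}$. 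The expected outcome is that $x$ is Example \ref{ex5} or Example \ref{ex6}, with the sign of $\kappa_2$ selecting the hyperbolic factor ($\kappa_2<0$, Example \ref{ex5}) or the spherical factor ($\kappa_2>0$, Example \ref{ex6}).

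The main obstacle is the explicit reconstruction in this last case. Here I would integrate the structure equations (\ref{struct}) along the two parallel distributions: using $\omega_{sj}=0$ for $s\in[2]$, $j\in[1]$ together with $dN(E_s)=a_2Y_s$, $d\xi(E_s)=-\lambda Y_s$ and $dY(E_s)=Y_s$ (with $a_2=d_2+\lambda^2$), one checks that the leaves of $[2]$ are totally umbilic in $\mathbb{Q}^{n+1}_1$ and hence map onto round spheres or hyperbolic spaces, exhibiting $x$ locally as the product appearing in Example \ref{ex5} or Example \ref{ex6}. Matching $\mathrm{tr}(A)$ and the mean curvature through (\ref{cond1}) and (\ref{coff1}) should reproduce exactly the constant-scalar-curvature relation and the condition $H_1=\tfrac{n}{k}\lambda$ imposed on the factor $y_1$, after which the conformal congruence theorem identifies $x$ with the model up to a conformal transformation. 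Verifying that the integrated factor is a genuine round sphere or hyperbolic space and that its warping matches the data of the form (\ref{metric}), (\ref{ex5i}), (\ref{ex6i}) is the delicate computational step.
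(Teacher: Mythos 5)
Your outline follows the paper's proof quite closely in its skeleton: the simultaneous diagonalization (your explicit appeal to (\ref{stru3}) to get $[A,B]=0$ is a justification the paper leaves tacit), the observation that with only two eigenvalues the sum in (\ref{der2}) is empty so $\omega_{ij}=0$ across the two eigendistributions, hence $D^{\lambda}$ is parallel, $(M,g)$ splits as a product and $R_{ijij}=0$ for $[i]\neq[j]$, and the key identity $(b_i+\lambda)(b_j+\lambda)=\lambda^2+d_1+d_2=:\mu$ — all of this is exactly the paper's (\ref{derd})--(\ref{derd3}). Your $\mu\neq 0$ branch is correct and in fact slightly cleaner than the paper's Claim 1 (you divide by $b_i+\lambda$ instead of subtracting two relations, so you never need an eigenvalue multiplicity $\geq 2$).

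There is, however, a genuine bookkeeping gap: your dichotomy $\mu\neq 0$ versus $\mu=0$ does not coincide with the paper's dichotomy ``$B$ has exactly two / more than two distinct eigenvalues,'' and the mismatch matters in the $\mu=0$ branch. When $\mu=0$ it can perfectly well happen that $b\equiv -\lambda$ on $[2]$ while $b$ is constant on $[1]$ with some value $\neq-\lambda$ (your relation only forces one block to be $\equiv-\lambda$; it does not force the other to be non-constant). In that sub-case $B$ has exactly two distinct constant eigenvalues and $x$ belongs to the models (1)--(3) via Theorem \ref{th1}; it is \emph{not} an instance of Example \ref{ex5} or \ref{ex6}, because those examples require the factor $y_1$ (resp.\ $y$) to be an umbilic-free spacelike hypersurface with $k\geq 2$, whereas your leaf-integration would there produce a totally umbilic factor. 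So your $\mu=0$ branch needs a further split on whether $b|_{[1]}$ is umbilic: if it is, route to Theorem \ref{th1}; if not (which automatically gives $m_1\geq2$), proceed with the reconstruction. Two smaller omissions: you use implicitly that $\kappa_2=2d_2+\lambda^2\neq0$ when you say its sign selects the factor — this needs the one-line check that $\kappa_2=0$ together with $\lambda^2+d_1+d_2=0$ would force $d_1=d_2$; and in your $\mu=0$ branch, block $[1]$ may still contain some entries equal to $-\lambda$, which is harmless but contradicts your ``exactly one block'' phrasing.

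On the reconstruction itself your route differs from the paper's: the paper realizes $h^1=B|_{M_1}+\lambda g_1$ abstractly as the second fundamental form of a spacelike hypersurface $u:N^{m_1}\to \mathbb{S}^{m_1+1}_1(r)$ (resp.\ $\mathbb{H}^{m_1+1}_1(-r)$) via the fundamental theorem, using (\ref{derd4}) as the Gauss equation and $C=0$ with (\ref{stru2}) for Codazzi, and then computes $H_1=n\lambda/m_1$ and the scalar curvature from (\ref{cond1}); you propose instead to integrate the frame (\ref{struct}) along the parallel distributions and show the $[2]$-leaves are umbilic in $\mathbb{Q}^{n+1}_1$. Both are viable, but note that the ``delicate computational step'' you defer is precisely where the paper's proof does its actual work, so as written your proposal stops short of a complete argument at exactly that point.
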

\begin{proof}
Since the conformal $1$-form vanishes, we can get a local local orthonormal basis
$\{E_1,\cdots,E_n\}$ such that
$$(B_{ij})=diag(b_1,\cdots,b_n),~~~(A_{ij})=diag(a_1,\cdots,a_n),~~(D^{\lambda}_{ij})=diag(d_1,,d_2,\cdots,d_n,).$$
Furthermore, we assume that
$$d_{1}=d_{2}=...=d_{m_{1}}=\mu,~~~d_{m_{1}+1}=d_{m_{1}+2}=...= d_{n}=\nu,~~\mu\neq\nu.$$
Making the following convention on the ranges of indices:
 $$1\leq p, q, r,...\leq m_{1},\ \ m_{1}+1\leq \alpha, \beta, \gamma,...\leq n, \ \ 1\leq i, j,  k \leq n.$$
From (\ref{der1}),  for all $i, j, p, q, \alpha, \beta$ we have
\begin{equation}\label{derd}
D^{\lambda}_{ij,i}=D^{\lambda}_{ii,j}=0, \  \ D^{\lambda}_{pq,i}=D_{\alpha \beta,i}=0, \  \omega_{p\alpha}=\frac{\sum D^{\lambda}_{p\alpha,k}\omega_{k}}{\mu_{1}- \mu_{2}}.
\end{equation}
Since $\mu$ and $\nu$ are constant,  using the  total symmetry of $D_{ij,k}$, we can get that $D^{\lambda}$ is parallel, i.e., $D_{ij,k}=0, \forall i, j,k.$

Let $V_{1}$ and $V_{2}$ be the eigen-subbundles of the tangent bundle $TM$ corresponding to $\mu, \ \nu$, respectively. Then $$TM^n=V_{1}\oplus V_{2}.$$
Since $D^{\lambda}$ is parallel, we have
\begin{equation}\label{derd1}
\omega_{p\alpha}=0,~~1\leq p\leq m_1,~~m_1+1\leq\alpha\leq n,
\end{equation}
which  implies that the Riemannian manifold $(M^n,g)$ can be decomposed locally into a direct product of two Riemannian manifolds $(M_{1},g_{1})$ and
$(M_{2},g_{2})$, that is $$(M,g)=(M_{1},g_{1})\times (M_{2},g_{2}).$$
Thus $R_{p\alpha p\alpha}=0,$  and from (\ref{stru4}) we know that
\begin{equation}\label{derd2}
-b_{p} b_{\alpha}+a_{p}+a_{\alpha}=0,~~~1\leq p\leq m_1,~~m_1+1\leq\alpha\leq n.
\end{equation}

{\bf Claim 1}: {\it  The  eigenvalues of the conformal second fundamental form $B$ satisfy either
 $b_{1}=b_{2}=...=b_{m_1}$, or $b_{m_1+1}=...=b_{n}$.}

{\bf Proof of Claim 1}:  We assume that $n\geq 3$. From (\ref{derd2}), we have
 $$-b_{p} b_{\alpha}-\lambda b_{p}-\lambda b_{\alpha}+\mu+\nu=0, \ \forall p, \alpha,$$
that is
\begin{equation}\label{derd3}
-(b_{p}+\lambda)( b_{\alpha}+\lambda)+\lambda^{2} +\mu+\nu=0, \ \forall p, \alpha.
\end{equation}

If $\lambda^{2} +\mu+\nu\neq0$,
(\ref{derd3}) implies $b_{p}+\lambda\neq0, \ b_{\alpha}+\lambda\neq0, \ \forall p, \alpha.$
Since $m_1\geq 2$, we have $-(b_{q}+\lambda)( b_{\alpha}+\lambda)+\lambda^{2} +\mu+\nu=0,$
which implies that
$$(b_{p}-b_{q})( b_{\alpha}+\lambda)=0. \ \forall  p\neq q,\  \alpha .$$
Thus
$$b_{1}=b_{2}=...=b_{m_1}.$$
Similarly, if $n-m_1\geq 2$, we can obtain  $b_{m_1+1}=b_{m_1+2}=...=b_{n}$.
Thus the conformal second fundamental form  $(B_{ij})$ has two distinct constant eigenvalues.

If $\lambda^{2} +\mu+\nu=0$,
(\ref{derd3}) implies $(b_{p}+\lambda)(b_{\alpha}+\lambda)=0$, which proves the Claim 1.

By Claim 1, the proof of the Proposition \ref{pro3} is divided into the following two cases.

{\bf Case I.} The conformal second fundamental form $B$ has only  two distinct eigenvalues. According to Theorem \ref{th1}, $x$ is locally conformal equivalent to one of the hypersurfaces in Example \ref{ex1}, Example \ref{ex2} and Example \ref{ex3}.

{\bf Case II.}  The conformal second fundamental form $B$ has  more than two distinct eigenvalues.

By Claim 1, we see that either $b_{1}=b_{2}=...=b_{m_1}$, or $b_{m_1+1}=b_{m_1+2}=...=b_{n}$.
Without loss of generality, assuming $b_{m_1+1}=b_{m_1+2}=...=b_{n}$. From the proof of the Claim 1, we know that
$b_{m_1+1}=b_{m_1+2}=...=b_{n}=-\lambda$,
$m_1\geq 2$ and $\lambda^{2} +\mu+\nu=0$.

Let $\tilde{h}_{pq}=B_{pq}+\lambda\delta_{pq}$, then from (\ref{stru4}), we obtain the components of curvature tensor on  $(M_{1},g_{1})$
\begin{equation}\label{derd4}
 \begin{array}{lll}
 R_{pqst}&=&-B_{ps}B_{qt}+B_{pt}B_{qs}+(\mu\delta_{ps}-\lambda B_{ps})\delta_{qt}+(\mu\delta_{qt}-\lambda B_{qt})\delta_{ps}\\
 &-&(\mu\delta_{pt}-\lambda B_{pt})\delta_{qs}-(\mu\delta_{qs}-\lambda B_{qs})\delta_{pt}\\
 &=&(2\mu+\lambda^{2})(\delta_{ps}\delta_{qt}-\delta_{pt}\delta_{qs})+\tilde{h}_{pt}\tilde{h}_{qs}-\tilde{h}_{ps}\tilde{h}_{qt}.
 \end{array}
 \end{equation}
 The components of curvature tensor on  $(M_{2},g_{2})$
\begin{equation}\label{derd5}
R_{\alpha \beta\gamma\eta}=(2\nu+\lambda^{2})(\delta_{\alpha \gamma}\delta_{\beta \eta}-\delta_{\beta \gamma}\delta_{\alpha \eta}).
\end{equation}
Hence if $n-m_1\geq 2$,  $(M_{2},g_{2})$  is of constant sectional curvature $2\nu+\lambda^{2}$.

Since $(2\mu+\lambda^{2})+(2\nu+\lambda^{2})=2(\lambda^{2} +\mu+\nu)=0$ and $\mu\neq\nu$, we need to consider the following two subcases:

{\bf Subcase 2.1}. $2\mu+\lambda^{2}>0, \ 2\nu+\lambda^{2}<0.$

Set $r=(2\mu+\lambda^{2})^{-\frac{1}{2}}$, then $ 2\nu+\lambda^{2}=-r^{-2},$ and $(M_{2},g_{2})$  can be locally identified with $\mathbb{H}^{n-m_1}(-r)$.
Let $v:\mathbb{H}^{n-m_1}(-r)\rightarrow \mathbb{R}_{1}^{n-m_1+1}$ be the standard totally umbilical hypersurface.

Writing $h^{1}=\sum_{p,q=1}^{m_1}\tilde{h}_{pq}\omega_{p}\otimes \omega_{q}$, by $C=0$ and (\ref{stru2}), we know $h^{1}$ is a Codazzi tensor on
 $(M_{1},g_{1})$,  (\ref{derd4}) means that there exists a space-like hypersurface
 $$u:N^{m_1}\rightarrow \mathbb{S}_{1}^{m_1+1}(r)\subset \mathbb{R}_{1}^{m_1+2},$$
 with $h^{1}$ as its second fundamental form. Clearly, $u$ has at least two non-zero principal curvatures.
According to  (\ref{derd4}) and (\ref{cond1}), we can prove directly that
$u$ is of constant mean curvature $H_{1}$ and  constant scalar curvature $R_{1}$ satisfying
$$H_{1}=\frac{n\lambda}{m_1},\ \ R_{1}=\frac{m_1(m_1-1)}{r^{2}}+\frac{n-1}{n}-n(n-1)\lambda^{2}.$$
Thus $x$ is locally conformal equivalent to the hypersurfaces in Example \ref{ex5}.

{\bf Subcase 2.2.} $2\mu+\lambda^{2}<0, \ 2\nu+\lambda^{2}>0.$

Set $r=(2\nu+\lambda^{2})^{-\frac{1}{2}}$, then $ 2\mu+\lambda^{2}=-r^{-2},$ and $(M_{2},g_{2})$  can be locally identified with $\mathbb{S}^{n-m_1}(r)$.
Let $v:\mathbb{S}^{n-m_1}(r)\rightarrow \mathbb{R}^{n-m_1+1}$ be the standard totally umbilical hypersurface.

Writing $h^{1}=\sum_{pq=1}^{m_1}\tilde{h}_{pq}\omega_{p}\otimes \omega_{q}$, by $C=0$ and (\ref{stru2}), we know $h^{1}$ is a Codazzi tensor on
 $(M_{1},g_{1})$,  (\ref{derd4}) means that there exists a space-like hypersurface
 $$u:N^{m_1}\rightarrow \mathbb{H}_{1}^{m_1+1}(-r)\subset \mathbb{R}_{2}^{m_1+2},$$
 with $h^{1}$ as its second fundamental form. Clearly, $u$ has at least two non-zero principal curvatures.
According to (\ref{derd4}) and (\ref{cond1}),  we can prove directly  that
$u$ is of constant mean curvature $H_{1}$ and  constant scalar curvature $R_{1}$ satisfying
$$H_{1}=\frac{n\lambda}{m_1},\ \ R_{1}=-\frac{m_1(m_1-1)}{r^{2}}+\frac{n-1}{n}-n(n-1)\lambda^{2}.$$
Thus $x$ is locally conformal equivalent to the hypersurfaces in Example \ref{ex6}.

Thus we complete the proof of Proposition \ref{pro3}
\end{proof}
Using Proposition \ref{pro1}, Proposition \ref{pro2} and Proposition \ref{pro3}, we finish the proof of Theorem \ref{theorem2}.

{\bf Acknowledgements:} Authors are supported by the
grant No. 11571037 and No. 11471021 of NSFC.


\begin{thebibliography}{11}
\bibitem{cahne}M. Cahen, Y. Kerbrat, {\sl Domaines symm\'{e}triques des quadriques projectives}, J. Math. Pure Appl., 62(1983), 327-348.
\bibitem{cheng}Q. M. Cheng, X. X. Li, X. R. Qi, {\sl A classification of hypersurfaces with parallel para-Blaschke tensor in $S^{m+1}$,} Int. J.
Math., 21(2010), 297-316.
\bibitem{guo}Z. Guo, J. B. Fang, L. M. Lin, {\sl Hypersurfaces with isotropic Blaschke tensor,} J. Math. Soc. Japan, 4(2011), 1155-1186.
\bibitem{guo1}Z. Guo, H. Li, C. P. Wang, {\sl The M\"{o}bius characterizations of Willmore tori and
Veronese submanifolds in unit sphere,} Pacific J. Math. 241(2009), 227-242.
\bibitem{hu}Z. J. Hu, H. Z. Li, {\sl Submanifolds with constant M\"{o}bius scalar curvature in $S^n$,}
Manuscripta Math., 111 (2003), 287-302.
\bibitem{hu1}Z. J. Hu, H. Z. Li, {\sl Classification of hypersurfaces with parallel M\"{o}bius second fundamental form in $S^{n+1}$,} Sci. China
Ser. A, 47(2004), 417-430(2004).
\bibitem{lin}T. Z. Li, C. X. Nie, {\sl  Conformal Geometry of Hypersurfaces in Lorentz Space Forms},  Geometry,  Volume 2013,  Article ID
549602.
\bibitem{lin1} T. Z. Li, C. X. Nie, {\sl Spacelike Dupin Hypersurfaces in Lorentzian
Space Forms}, Preprint.(to appear in Journal of the Mathematical Society of Japan.)
\bibitem{lih}H. Z. Li, C. P. Wang, {\sl M\"{o}bius geometry of hypersurfaces with constant mean curvature and scalar
curvature,} Manuscripta Math., 112 (2003), 1-13.
\bibitem{lih3}H. Z. Li, {\sl Willmore hypersurfaces in a sphere,} Asian J. Math., 5 (2001), 365¨C377.
\bibitem{lix1}X. X. Li, H. R. Song, {\sl On the regular space-like hypersurfaces with parallel Blaschke tensors in the de Sitter space $S^{m+1}_1$},
arXiv [math. DG]: 1511.02560, 2015; to appear.
\bibitem{lix4}X. X. Li, H. R. song, {\sl Regular Spacelike hypersurfaces in $S^{m+1}_1$ with parallel para-Blaschcke tensor},arXiv [math. DG]: 1511.03261.
\bibitem{lix2}X. X. Li, F. Y. zhang, {\sl Immersed hypersurfaces in the unit sphere $S^{m+1}$ with constant Blaschke eigenvalues,} Acta
Math. Sinica, (Eng. ser.), 23(2007), 533-548.
\bibitem{lix3}X. X. Li, F. Y. Zhang, {\sl A classification of immersed hypersurfaces in spheres with parallel Blaschke tensors,} Tohoku
Math. J., 58(2006), 581-597.
\bibitem{nie}C. X. Nie, T. Z. Li, Y. J. He, et al.: {\sl Conformal isoparametric hypersurfaces with two distinct conformal
principal curvatures in conformal space.} Sci. China Ser. A, 53(2010), 953-965.
\bibitem{nie1}C. X. Nie, {\sl Blaschke Isoparametric Hypersurfaces in the Conformal Space $Q^{n+1}_1$ I},
Acta Math. Sinica, English series, 31(2015), 1751-1758.
\bibitem{nie2}C. X. Nie, C. X. Wu, {\sl Spacelike hypersurfaces with parallel conformal second fundamental forms in the conformal
space,} Acta Math. Sinica, Chinese series, 51(2008), 685-692.
\bibitem{o}O'Neil B., {\sl Semi-Riemannian Geometry}, Academic Press, New York(1983).
\bibitem{w} C. P. Wang,  {\sl M\"{o}bius geometry of submanifolds in $\mathbb{S}^n$},  Manuscripta
Math., 96(1998), 517-534.
\end{thebibliography}
\end{document}